\newtheorem{theo}{\indent Theorem}[section]
\newtheorem{prop}[theo]{\indent Proposition}
\newtheorem{rem}[theo]{\indent Remark}
\newtheorem{cor}[theo]{\indent Corollary}
\newtheorem{ex}[theo]{\indent Example}
\newtheorem{ass}[theo]{\indent Assumption}
\def \XX{\bar X^{G }}
\def\ttt{\mathbf{t}}
\def\zz{\mathbf{z}}
\def\hh{\mathbf{h}}
\newcommand{\R}{\mathbb {R}}
\newcommand{\N}{\mathbb {N}}
\newlength{\breite}
\title[Convergence to equilibrium for time inhomogeneous processes]{Convergence to equilibrium for time inhomogeneous jump diffusions with state dependent jump intensity}
\date{October 29, 2018}
\author{E. L\"ocherbach }
\address{E. L\"ocherbach: SAMM, Universit\'e de Paris 1 Pant\'ehon Sorbonne,
90 rue de Tolbiac, 75013 Paris, France.}
\email{eva.locherbach@univ-paris1.fr}
\begin{document}
\maketitle
\def\abstractname{Abstract}
\begin{abstract}
We consider a time inhomogeneous Markov process $X = (X_t)_t$ with jumps having state dependent jump intensity, with values in $\R^d , $ and we are interested in its long time behavor. The infinitesimal generator of the process is given for any sufficiently smooth test function $f$ by 
$$
L_t f (x) = \sum_{i=1}^d \frac{\partial f}{\partial x_i } (x) b^i ( t,x)  
+ \int_{\R^m } [ f ( x + c  ( t, z, x)) - f(x)] \gamma  ( t, z, x) \mu  (dz )  ,
$$ 
where $ \mu   $ is a sigma-finite measure on $(\R^m , {\mathcal B} ( \R^m ) ) $ describing the jumps of the process.

We give conditions on the coefficients $ b(t, x) ,  c(t, z, x) $ and $ \gamma ( t, z, x ) $ under which the long time behavior of  $X$ can be related to the one of a time homogeneous limit process $\bar X . $ Moreover, we introduce a coupling method for the limit process which is entirely based on certain of its big jumps and which relies on the regeneration method. We state explicit conditions in terms of the coefficients of the process allowing to control the speed of convergence to equilibrium both for $X$ and for $\bar X.$ 

\end{abstract}

{\it Key words} : Diffusions with position dependent jumps, Nummelin splitting, total variation coupling, continuous
time Markov processes, convergence to equilibrium, asymptotic pseudotrajectories. 
\\

{\it MSC 2000}  : 60 J 55, 60 J 35, 60 F 10, 62 M 05

\section{Introduction}
In this paper we study a rather general class of jump type stochastic differential equations taking values in $ \R^d  $ evolving according to  
\begin{equation}\label{eq:sde}
X_t = x +\int_0^t b (s, X_s) d s + \int_{[0, t ]} \int_{  \R^m \times \R_+} c (s, z, X_{s-}) 1_{ u \le \gamma  ( s,z, X_{s-})}  N (ds,d z, du ) ,
\end{equation}
with $x \in \R^d . $  In the above equation, $N(ds, dz, du ) $ is a Poisson random measure, defined on a fixed probability space $(\Omega, {\mathcal A}, P) ,$ with $ (s, z, u ) \in \R_+ \times \R^m  \times \R_+, $ having intensity measure $ d s \mu  (d z)  d u ,$ for some $\sigma -$finite measure  $\mu $ on $ (\R^m , {\mathcal B} ( \R^m )   ) .$ The associated infinitesimal generator at time $t$ is given by  
\begin{equation}\label{eq:gen}
L_t f (x) = \sum_{i=1}^d \frac{\partial f}{\partial x_i } (x) b^i ( t,x) 
+ \int_{\R^d} [ f ( x + c ( t, z, x)) - f(x)] \gamma ( t, z, x) \mu (dz )  .
\end{equation} 

The coefficients of the system are the measurable functions $ b : \R_+ \times \R^d \to \R^d , $ $c : \R_+  \times \R^m \times \R_+  \to \R^d $ and $ \gamma : \R_+ \times \R^m  \times \R_+  \to \R_+  .$ We shall always work under conditions ensuring that \eqref{eq:sde} admits a unique strong non-explosive adaptive (to the filtration generated by the Poisson random measure) solution which is Markov, having c\`adl\`ag trajectories which are of finite variation (see Assumption \ref{conditions} below). 

Let us give some comments on \eqref{eq:sde}. If the jump rate $ \gamma $ is a constant function, then the above process is a classical jump process. But if $\gamma $ is not constant, then the jump intensity and also the jump amplitude depend on the current position of the process. This is a natural assumption in many modeling issues (see e.g. \cite{CDMR}, \cite{PTW-10} or \cite{pierrennathalie} for the modeling of biological or chemical phenomena, see \cite{ABGKZ} for an overview). 

Observe that if the measure $ \mu$ is finite, then the process evolving according to \eqref{eq:sde} is a piecewise deterministic Markov process (PDMP). PDMP's have been introduced by Davis (\cite{Davis84} and \cite{Davis93}); they evolve in a deterministic manner in between successive jump events, and only a finite number of jumps occur during finite time intervals. Here, we will however deal with the general infinite activity case. 

The goal of this paper is to describe how the solution of \eqref{eq:sde} behaves as $t \to \infty .$ 
Let us illustrate the main ideas by some examples. Suppose e.g.\ that there exist measurable functions $ c ( z, x) : \R^m \times \R^d \to \R^d  ,$ $ \gamma ( z, x) : \R^m \times \R^d \to \R_+  $ and $  b_1 ( x) : \R^d \to \R^d $ such that for all $ z \in \R^m , x\in \R^d , $ 
$$ |c(t, z, x) - c(z, x) | + | \gamma ( t, z, x ) - \gamma ( z, x) | +  | b(t, x) - b_1 ( x) |   \to 0 \mbox{  as } t \to \infty .$$ 

Then (under suitable additional technical conditions) the long time behavior of \eqref{eq:sde} will be well-described by another process of the same type,  solution of 
$$ 
\bar X_t = x +\int_0^t b ( \bar X_s) d s + \int_{[0, t ]} \int_{  \R^m \times \R_+} c (z, \bar X_{s-}) 1_{ u \le \gamma  ( z,\bar  X_{s-})}  N (ds,d z, du ) ,
$$
having {\it time homogenous coefficients.} We will call this regime of convergence the {\it slow regime}, and we notice that in this case jumps survive in the limit process. 

Of course this is not the only possible scenario, and two other limit regimes exists. They  both appear in the situation where, as $t \to \infty, $ the jump heights tend to $0.$ If they do so in a moderate way, they will just produce a limit drift. If they converge to $0$ sufficiently fast, they will generate a limit diffusive part. 

Suppose e.g. that the {\it drift} produced by the jumps at time $t, $ given by 
$$ \tilde b( t, x) = \int_{\R^m} c (t, z, x) \gamma ( t, z, x ) \mu ( dz), $$
converges to a limit drift vector field $b_2 ( x) $  and that $\int_{\R^m} |c (t, z, x)|^2  \gamma ( t, z, x ) \mu ( dz) \to 0 $ as $t \to \infty .$ In this case, the corresponding time homogenous limit process $\bar X_t$  is solution of the deterministic equation 
$$ d \bar X_t = b (\bar X_t ) dt , \; \mbox{ with } b (x) = b_1 (x) + b_2 (x) .$$ 
We call such a limit regime the {\it intermediate jump regime} - it produces a deterministic limit process, and such results are of course related to the law of large numbers. 

Finally, the jump part in \eqref{eq:sde} can be centered, that is, $ \tilde b( t, x) = 0 $ for all $ t, x .$ In this case, we are in the {\it fast jump regime} -- and interesting limit features may appear if the variance of the jump part given by 
$$   a^{ij} ( t, x) = \int_{\R^m} c^i (t, z, x) c^j (t, z, x) \gamma ( t, z, x ) \mu  (dz ) , 1 \le i, j \le d ,$$
converges, as $t \to \infty , $ to some limit variance $ a(x) $ giving rise to a limit diffusive part, and if at the same time, higher order terms $ \int_{\R^m} |c (t, z, x)|^3  \gamma ( t, z, x ) \mu ( dz)  $ disappear, as time tends to infinity. 

Notice that these three jump regimes may appear simultaneously as shows the following example.

\begin{ex}\label{ex:CIR}
Let $ d=m=  1 $ and $ \mu ( dz ) = dz . $ For $t > 0$ and for some $r >0,$ let 
$$ c (t, z, x ) = 
\frac{\sigma }{2 } e^{-rt}  [ 1_{ ]-3 e^{2rt}  , -2e^{2rt} [ } (z) - 1_{ ] -2e^{2rt} , -e^{2rt}  [ } (z) ]  - a x e^{-2rt}   1_{ ]- e^{2rt}, 0 [ } (z) + \frac{d}{(1 + z)^2 }  1_{ ]0, e^{2rt} [ } (z),$$
$b(t, x ) = b $  and 
$$ \gamma(t, z, x) =  f(x) 1_{ ]-3 e^{2rt}  , - e^{2rt} [ } (z) +   1_{ ]- e^{2rt}, 0 [ } (z)  +   f(x) 1_{ ]0, e^{2rt} [ } (z) , $$
for some constants $ \sigma , d \in \R $ and $ a, b  > 0 .$ Here, $f : \R \to \R_+$ is bounded, having bounded derivative such that $ \inf_{ x \in \R } f( x) > 0 .$ Let $X_t$ be solution of \eqref{eq:sde} with these parameters, starting from $X_0 = x \in \R_+  . $ 

By construction, jumps coming from ``noise'' $ z \in ]-3  e^{2rt},- e^{2rt} [ $ are centered, that is, 
$$ \int_{-3 e^{2rt}}^{-e^{2rt}} c(t, z, x) \gamma ( t, z , x ) \mu (dx ) = 0, $$ 
and the associated variance is given by $ a (t, x) = \sigma^2   f(x) $ for all $ t, x .$ 
Moreover, for all $ t, x, $ 
$$ \int_{-e^{2rt}}^{0} c(t, z, x) \gamma ( t, z , x ) \mu (dx ) =  - a x $$ 
giving rise to a limit drift $-a x  .$ Finally, the jumps produced by noise $z > 0 $ survive in the limit process -- this corresponds to the slow jump regime. It is straightforward to show that the associated limit process is a Cox-Ingersoll-Ross type jump process given by  
$$
d \bar X_t =  (b - a  \bar X_t    ) dt + \sigma  \sqrt{  f (\bar X_t ) } d W_t  +  \int_{ \R  \times \R_+} \frac{d}{(1 + z)^2 }  1_{ \{ u  \le  f( \bar X_{t-}) \}}  N (dt,d z, du ) .
$$
\end{ex}

Let us now come back to the general frame of \eqref{eq:sde}. In this paper, we will given conditions on the coefficients of the system that guarantee that the associated limit process is a time homogeneous jump diffusion process $\bar X_t $ having generator
\begin{multline}\label{eq:12}
L f(x) = \sum_{i=1}^d \frac{\partial f}{\partial x_i } (x) g^i  ( x) + \frac{1}{2} \sum_{i, j = 1}^d \frac{\partial^2 f}{\partial x_i \partial x_j  } (x) a^{i j }  ( x)
+ \int_{{\R^m}} [ f ( x + c (  z, x)) - f(x)] \gamma (  z, x) \mu (dz ),
\end{multline}
where $ g = b_1 + b_2, $ with $b_2$ the limit drift of the intermediate regime, $a  $ the limit variance associated to the fast regime  and $c $ and $\gamma $ the limit jump intensity and height of the slow regime. 

Let us now define precisely what we mean when saying that $ \bar X$ is the limit process associated to \eqref{eq:sde}. We formalize this idea by using the notion of {\it asymptotic pseudotrajectories} which has been introduced in Bena\"{\i}m and Hirsch  \cite{bh96} (1996) and then further used in \cite{bouguetetal} and which provides a general framework to deal with the long time behavior of non-autonomous processes. 

For $X_t $ a solution of \eqref{eq:sde}, starting from any arbitrary initial law $ {\mathcal L} ( X_0) , $ let 
$$ \mu_t := {\mathcal L} (X_t) $$
and write $P_t$ for the transition semigroup of the limit process $\bar X_t.$ We introduce for any two probability measures $ \mu $ and $\nu $ on $(\R^d , {\mathcal B} ( \R^d ) ) $ and any class of test functions $ {\mathcal F}$ the distance
$$ d_{\mathcal F} ( \mu , \nu ) := \sup_{ f \in {\mathcal F}} | \mu ( f) - \nu ( f) | .$$
The main result of this paper, Theorem \ref{cor:1}, gives explicit conditions on the coefficients of \eqref{eq:sde} implying that for a suitable class of test functions $ {\mathcal F} $ and for any $ T < \infty , $ 
\begin{equation}\label{eq:aspseudo}
 \lim_{t \to \infty } \sup_{s \le T} d_{\mathcal F} ( \mu_{t + s } , \mu_t P_s ) = 0.
\end{equation}
This means that $ ( \mu_t)_t $ is an asymptotic pseudotrajectory of $ (P_t)_t $ in the sense of \cite{bh96}. 
Furthermore, if the limit process $\bar X_t$ is exponentially ergodic with unique  invariant probability measure $\pi , $ we shall also prove the weak convergence of $\mu_t$ to $  \pi , $ as $ t \to \infty ,$ together with a control of the speed of convergence (see Corollary \ref{cor:3}). 

The study of non-stationarity is a challenging problem, in particular in statistics of stochastic processes, and a lot of papers have been devoted to this topic during the last decade, see e.g. \cite{JM}, \cite{Dahlhaus2017} or \cite{hlt}  and the references therein. All these papers deal either with the time discrete case or with some periodic setting (leading again to a discrete description), and to the best of our knowledge, the results of Theorem \ref{cor:1} and Corollary \ref{cor:3} are one of the first results that allow to deal  with the longtime behavior of time inhomogeneous processes in the framework of continuous time observations.

To prove \eqref{eq:aspseudo}, we have to control two schemes of convergence. Firstly the convergence of $X_t $ to $ \bar X_t ,$ and secondly, the convergence of $ \bar X_t$ to its invariant regime $ \pi .$ 

The main point to prove the convergence of $X_t$ to the limit process is a control of the asymptotic behavior of the generator $L_t $ as $t \to \infty , $ using a Taylor expansion, and to prove that $ L_t $ converges to $L, $ in a sense that has to be made precise. It is then a classical approach, relying on the Trotter-Kato theorem, to show that this convergence implies the one of the associated transition semigroups. This implication is classical if the limit transition semigroup {\it preserves regularity}, but it is more difficult in the present frame, since the presence of the position dependent jump rate $ \gamma $ makes the regularity analysis of the associated transition semigroup more intricate and difficult. We rely on recent results of Bally et al. \cite{ballyetc} (2017) which enable us to solve this difficulty.

Let us now comment on the second scheme of convergence, the convergence of $\bar X_t$ to its invariant measure $ \pi . $ For classical jump diffusions there starts to be a huge literature on this subject. Masuda \cite{Ma07} (2007) follows the Meyn and Tweedie approach developed in \cite{Me93book} or \cite{Me93}, but he works in the simpler situation where the  intensity term $\gamma ( z, x) $ of \eqref{eq:12} is not present. Kulik \cite{kulik} (2009) uses the stratification method to prove exponential ergodicity of jump diffusions, but the models he considers do not include position dependent jump rates neither. Finally, Duan and Qiao \cite{DQ14} (2016) are interested in solutions driven by non-Lipschitz coefficients. 
None of the above mentioned papers is applicable to our situation due to the presence of the position dependent jump intensity $ \gamma ( z, x ) ,$ and therefore the first part of this paper is devoted to the ergodicity analysis of the process $ \bar X_t.$  More precisely, we show that the jumps themselves can be used in order to generate an explicit coupling method which leads to a control of the speed of convergence to equilibrium of $ \bar X_t.$ This coupling method  relies on the regeneration method which has been introduced in L\"ocherbach and Rabiet \cite{Victor} (2017) and which is applied to the big jumps. 

In spirit of the splitting technique introduced by Nummelin \cite{Nu78} (1978) and Athreya and Ney \cite{At78} (1978), we state a non-degeneracy condition which guarantees that the jump operator associated to the big jumps possesses a Lebesgue absolutely continuous component. This amounts to imposing that the partial derivatives of the jump term $c (z, x) $ in \eqref{eq:12} with respect to $z$ are sufficiently non-degenerate, see Assumption \ref{ass:final} below, leading to a sort of local Doeblin condition for the jump operator associated to big jumps. Roughly speaking this local Doeblin condition implies that  jumps issued of a pre-jump position $x $ belonging to a certain set $ C$ generate noise independently of the starting position $x.$ This relevant set $C$ would be what \cite{Me93book} call a ``petite set''. 

In order to be able to couple two trajectories of the limit process, we have then to ensure that they happen to visit the set $C$ at the same time. This is granted by a Lyapunov type condition implying that the process returns to a big compact $K$ infinitely often, together with a control of the moments of the associated hitting times. Moreover, we need a control argument that allows to steer the trajectory of $\bar X_t $ from any starting position $x \in K$ to the target set $ C$-- this control is based on an approximation of the process $\bar X$ by a finite activity process  where only big jumps are considered, see Theorem \ref{theo:controlXbar} below.  As a consequence, we are able to state our Theorem \ref{theo:tv} which proves the unique ergodicity of $\bar X$ together with a control of the speed of convergence to equilibrium with respect to total variation distance.

Our paper is organized as follows. Section \ref{sec:1} is devoted to the proof of the unique ergodicity of the limit process $ \bar X_t$ together with the control of the speed of convergence to equilibrium, see Theorem \ref{theo:tv}. In this section, we also explain the regeneration technique based on big jumps and prove the existence of a finite coupling time of two copies $ \bar X_t^x $ and $ \bar X_t^y , $ together with the existence of all of its polynomial moments under a suitable Lyapunov type condition. Section \ref{sec:2} is devoted to the study of the time inhomogeneous process $ X_t $ having generator \eqref{eq:gen}. Here, we state  Theorem \ref{cor:1} which proves the convergence of $ X_t$ to the limit process $\bar X_t , $ and Theorem \ref{theo:2} together with Corollary \ref{cor:3} proving the convergence of ${\mathcal L}(X_t) $ to the unique invariant probability measure $ \pi $ of the limit process $ \bar X_t .$ Finally, Section 4 gives some examples, in particular we discuss systems of Hawkes processes with mean field interactions, exponential memory kernels and variable length memory. Some mathematical proofs are shifted to the Appendix section.

\section{Unique ergodicity and Speed of convergence to equilibrium for jump diffusions with state dependent jump intensity}\label{sec:1}
\subsection{Notation}
Throughout this paper, for $x \in \R^d, $ $| x|  $ will denote the Euclidean norm on $\R^d .$ 
Moreover, for a multi-index $\alpha = ( \alpha_1, \ldots, \alpha_q) \in \{ 1 \ldots, d \}^q , $ we write $ | \alpha | = q $ for the length of $\alpha $ and $ \partial_x^\alpha = \partial_{x_{\alpha_1}} \ldots \partial_{x_{\alpha_q}}$ for the associated partial derivative. For a function $ f : \R^d \to \R $ which is $q$ times differentiable, we introduce for any $ 0 \le l \le q , $ 
\begin{equation} \| f \|_{l, q, \infty } :=  \sum_{l \le  | \alpha | \le q } \sup_{x \in \R^d } | \partial^\alpha_x f (x) | \mbox{ and } \| f \|_{q, \infty } := \| f \|_\infty + \| f \|_{1, q , \infty } .
\end{equation}
We write $ C^q_b ( \R^d ) $ for the class of all functions $f : \R^d \to \R$ such that $ \| f \|_{q, \infty } < \infty .$

\subsection{The model}

Let $\mu$ be a  $\sigma-$finite measure  on $({\R^m} , {\mathcal B} ({\R^m})).$ Moreover, let $(\Omega, {\mathcal A} , P ) $ be a probability space on which are defined a Poisson random measure $ N ( ds, dz , du ) ,$ which is a measure on $\R_+ \times {\R^m} \times \R_+$ having intensity measure $ ds \mu ( dz) du , $  as well as independent, standard $1-$dimensional Brownian motions $W^1, \ldots , W^k $ which are independent of $ N .$ We consider the following jump diffusion equation taking values in $\R^d, $ 
\begin{multline}\label{eq:sde0}
\bar X_t = x +\int_0^t  g   ( \bar X_s) d s +  \sum_{l=1}^k \int_0^t  \sigma_l ( \bar X_s) d  W^l _s \\
+ \int_{[0, t ]} \int_{  {\R^m} \times \R_+}  c ( z, \bar X_{s-}) {1}_{ u \le  \gamma ( z, \bar X_{s-})} N  (ds, d z, d u ) ,
\end{multline}
where the coefficients $ g (x),   $ $ c(z, x)$ and $ \gamma (z, x) $ are measurable functions satisfying the following assumption.

\begin{ass}\label{conditionsbis}
\begin{enumerate}
\item
$g$ and $ \sigma_l, 1 \le l \le k, $ are $C^1 $ and $ \| \nabla g \|_\infty  +\sum_{l=1}^k \| \nabla \sigma_l  \|_\infty < \infty .$ 
\item
$c$ and $ \gamma$ are continuous, Lipschitz continuous with respect to $x,$ i.e. 
$$ |c ( z, x ) - c (z, x') | \le L_c( z ) | x - x ' | \mbox{ and } |\gamma ( z, x ) - \gamma (z, x') | \le L_{\gamma}( z ) | x - x ' |,$$
where $ L_c, \ L_{\gamma} $ are measurable functions $  {\R^m}  \to \R_+ .$ 
\item

\begin{equation}\label{eq:cmu}
C_\mu ( \gamma , c ) :=  \sup_{x \in \R^d  } \int_{{\R^m}  }( L_{c} ( z)  \gamma (z,x) +  L_{\gamma} ( z) |c (z,x) |) \mu  (d z)  < \infty .
\end{equation}
\item
$\sup_{x \in \R^d }  \sup_{z \in {\R^m}}   \gamma (z,x)  = \Gamma < \infty  .$
\item $\sup_{ x \in \R^d } \int_{G} | c (z, x) | \gamma ( z, x) \mu (dz) < \infty .$ 
\end{enumerate}
\end{ass} 

Under these assumptions, Theorem 1.2.\ of Graham  \cite{carl} (1992) implies that  \eqref{eq:sde0} admits a unique strong non-explosive adapted solution which is Markov, having c\`adl\`ag trajectories.
We denote by $ \bar X_{t_0, t}^x , t \geq t_0, $  a version of the above process starting from the position $x \in \R^d $ at time $t_0 .$ Whenever $t_0 = 0, $ we shall shortly write $ \bar X_t^x := \bar X_{0, t }^x .$ Finally, if we do not want to specify the initial value at time $0$ of the process, we shall simply write $ \bar X_t $ for the process. 

Our aim is to state easily verifiable conditions on the parameters $ g, \sigma_l, c $ and $ \gamma $ of the process that imply the unique ergodicity of the process $ \bar X$ together with a control of the speed of convergence to equilibrium. Our approach relies on the regeneration technique based on the jump transitions. Therefore, we first state a sufficient condition implying a local Doeblin condition for the jump transitions. This is done in the next subsection and used ideas  introduced in \cite{Victor}.

\subsection{A Doeblin lower bound for the jump-transitions}
We control the rate of convergence to equilibrium based on a splitting scheme reminiscent of the regeneration technique. This scheme is entirely based on certain {\it big} jumps of $\bar X$ (that will be defined below) and needs the following non-degeneracy condition on the jump noise and the associated jump rate.
\begin{ass}\label{conditions2}
We suppose that $m \geq d.$ Let $ \mu  = \mu_{ac} + \mu_{s}  $ be the Lebesgue decomposition of $\mu ,$ with $ \mu_{ ac} (d z) = h(z) d z ,$ for some measurable function $h \geq 0 \in L^1_{loc} ( \lambda ) ,   $ where $\lambda $ is the Lebesgue measure on $\R^m .$ 

Then  there exist $x_0 \in \R^d ,   z_0 \in \R^m $ and $r,  R > 0 $ such that 
$$ \inf_{ z : |z - z_0 | \le R  , x : |x - x_0|\le r  } \gamma (z,x) h (z)  = \varepsilon  > 0. $$ 
 \end{ass}

In order to introduce what we shall call {\it big jumps} of the process, we impose the following condition which implies that the measure $ \gamma (x, z) \mu ( dz)  $ is sigma-finite, uniformly in $x.$ 

\begin{ass}\label{conditions2bis}
There exists a non-decreasing sequence $(G_n)_n $ of subsets of $ \R^m  $ and an increasing sequence of positive numbers $ \Gamma_n $ with $ \Gamma_n \uparrow +\infty $ as $n\to \infty, $ such that $ \bigcup G_n = \R^m ,$
\begin{equation}\label{eq:explosion}
 \int_{G_n}\gamma(z, x) \mu (d z ) =: \bar \gamma_n ( x) \le  \Gamma_n < \infty 
\end{equation}
for all $x \in \R^d $ and for all $n.$  
\end{ass}

We fix some $n.$ Thanks to \eqref{eq:explosion}, we can couple the process $\bar X_t$ with a rate $\Gamma_n-$Poisson process $ N^{[n]} = (N^{[n]}_t)_{t \geq 0}$ such that jumps of $\bar X_t$ produced by noise $z \in G_n,$ 
$$ \Delta^{[n]}  \bar X_t : = \int_{G_n } \int_0^\infty c( z, \bar X_{t-}) 1_{ \{u \le \gamma ( z, \bar X_{t-})\} } N (dt, dz, du) ,$$
can only occur at the jump times $ T_k^{[n]} , k \geq 1 ,$ of $N^{[n]}.$ We will construct our regeneration scheme based on these {\it big jumps} $ T_k^{[n]}, k \geq 1, $ for a suitably chosen truncation level $n.$ Let 
\begin{equation}\label{eq:Pi}
 \Pi ^{[n]} ( x, dy ) = {\mathcal L} ( \bar X_{T_k^{[n]} } | \bar X_{T_k^{[n]} - } = x ) (dy )
\end{equation}
be the transition kernel associated to big jumps. Let $ x_0, z_0 \in \R^d $ be as in Assumption \ref{conditions2}. We then assume

\begin{ass}\label{ass:final}
$ \nabla_z c ( z_0, x_0 ) $ has full rank. 
\end{ass}

\begin{prop}
Grant Assumptions \ref{conditions2}, \ref{conditions2bis} and \ref{ass:final}. Let $ r , R > 0 $ be as in Assumption \ref{conditions2} and fix $n_0 $ such that  $ \{ z \in \R^m : | z - z_0 | \le R \} \subset G_{n_0 }.$ Then there exist $\eta > 0, \beta \in ]0 , 1 [ $ and probability measure $ \nu $ on $(\R^d, {\mathcal B}(\R^d ) ) $ such that for any  $ V \in {\mathcal B} ( \R^d ) , $ 
\begin{equation}\label{eq:minoration}
\inf_{ x : |x - x_0| <  \eta } \Pi^{[n]} ( x, V) \geq  \beta \nu ( V) .
\end{equation} 
\end{prop}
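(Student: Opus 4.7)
The plan is to exploit the thinning representation of $\Pi^{[n]}$ and then apply a change of variables near $(z_0, x_0)$ coming from Assumption \ref{ass:final}. At each jump time of the rate-$\Gamma_{n}$ Poisson process $N^{[n]}$, a jump of $\bar X$ occurs with probability $\bar\gamma_n(x)/\Gamma_n$, conditionally on which the mark $z$ has law $\gamma(z,x) \mathbf{1}_{G_n}(z) \mu(dz)/\bar\gamma_n(x)$. Hence
$$\Pi^{[n]}(x, dy) = \Big(1-\frac{\bar\gamma_n(x)}{\Gamma_n}\Big)\delta_x(dy) + \frac{1}{\Gamma_n}\int_{G_n}\delta_{x+c(z,x)}(dy)\,\gamma(z,x)\mu(dz).$$
Dropping the Dirac mass, keeping only the absolutely continuous part $\mu_{ac}(dz)=h(z)\,dz$, restricting to $\{|z-z_0|\le R\}\subset G_{n_0}\subset G_n$, and invoking Assumption \ref{conditions2} yields, for every $x$ with $|x-x_0|\le r$ and every $V\in\mathcal{B}(\R^d)$,
$$\Pi^{[n]}(x,V)\ \ge\ \frac{\varepsilon}{\Gamma_n}\int_{|z-z_0|\le R}\mathbf{1}_V\big(x+c(z,x)\big)\,dz.$$

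Next, I would convert this integral into Lebesgue mass on $\R^d$. Because $\nabla_z c(z_0,x_0)\in\R^{d\times m}$ has full rank $d$ and $m\ge d$, after a permutation of coordinates I may assume that the $d\times d$ block $\partial_{z'}c(z_0,x_0)$ made of the first $d$ columns is invertible. Write $z=(z',z'')\in\R^d\times\R^{m-d}$ and set $F(z',z'';x):=x+c(z',z'';x)$ and $y_0:=x_0+c(z_0,x_0)$. Continuity of $\partial_{z}c$ then allows me to fix radii $R_1,R_2,\eta_1>0$ with $R_1^2+R_2^2\le R^2$ and $\eta_1\le r$, on which $|\det\partial_{z'}F|$ is bounded above and below by strictly positive constants. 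A quantitative version of the inverse function theorem, carried out uniformly in the parameters $(z'',x)$, then produces smaller radii $R_1',R_2'>0$, $\eta\in(0,\eta_1]$ and $\rho>0$ such that, for every $(z'',x)$ with $|z''-z_0''|<R_2'$ and $|x-x_0|<\eta$, the map $z'\mapsto F(z',z'';x)$ is a $C^1$-diffeomorphism from $\{|z'-z_0'|<R_1'\}$ onto an open set containing the common ball $\{|y-y_0|<\rho\}$.

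The change of variable $y=F(z',z'';x)$ combined with the uniform upper bound on $|\det\partial_{z'}F|$ then gives, for some constant $M>0$ independent of $(z'',x)$,
$$\int_{|z'-z_0'|<R_1'}\mathbf{1}_V\big(F(z',z'';x)\big)\,dz'\ \ge\ M\,\lambda\big(V\cap\{|y-y_0|<\rho\}\big),$$
where $\lambda$ denotes Lebesgue measure on $\R^d$. Integrating over $|z''-z_0''|<R_2'$ and chaining with the first display produces, for every $x$ with $|x-x_0|<\eta$,
$$\Pi^{[n]}(x,V)\ \ge\ \frac{\varepsilon\,M\,\lambda(\{|z''-z_0''|<R_2'\})}{\Gamma_n}\,\lambda\big(V\cap\{|y-y_0|<\rho\}\big).$$
Setting $\nu(dy):=\lambda(\{|y-y_0|<\rho\})^{-1}\mathbf{1}_{\{|y-y_0|<\rho\}}(y)\,dy$ and taking $\beta$ to be the minimum of $1/2$ and the constant prefactor above times $\lambda(\{|y-y_0|<\rho\})$ yields the announced inequality with $\beta\in(0,1)$.

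The main technical point is making the inverse function theorem argument \emph{uniform} in $(z'',x)$: one must recover a common target ball $\{|y-y_0|<\rho\}$ contained in every image $F(\{|z'-z_0'|<R_1'\},z'';x)$, together with a Jacobian that is uniformly bounded above and below on the relevant set. This is standard once $c$ is $C^1$ jointly in $(z,x)$ near $(z_0,x_0)$, but it does require a quantitative argument rather than the pointwise inverse function theorem.
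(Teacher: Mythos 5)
Your proof follows the same route as the paper: both first lower-bound $\Pi^{[n]}(x,V)$ by dropping the non-jump Dirac mass, restricting to the absolutely continuous part of $\mu$ on the ball $\{|z-z_0|\le R\}\subset G_{n_0}$, and invoking Assumption~\ref{conditions2} to reach $\Pi^{[n]}(x,V)\ge \frac{\varepsilon}{\Gamma_n}\int_{|z-z_0|\le R}\mathbf{1}_V(x+c(z,x))\,dz$, after which the full-rank condition on $\nabla_z c(z_0,x_0)$ is used to turn Lebesgue mass in the $z$-variable into Lebesgue mass in the $y$-variable. The only difference is that the paper delegates this last step to ``standard arguments'' and Lemma~6.3 of \cite{micheletal}, whereas you carry it out explicitly (coordinate splitting $z=(z',z'')$, a quantitative inverse function theorem uniform in $(z'',x)$, and a change of variables), which is a faithful in-line version of the cited lemma rather than a genuinely different argument.
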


\begin{proof}
Write $ \mathcal{K} = \{ z\in \R^m : | z - z_0 | \le R \}  $ with $z_0 $ and $ R$ chosen according to Assumption \ref{conditions2}. Then for all $ V \in {\mathcal B} ( \R^d ) $ and for all $ x , |x- x_0 | \le r , $  
\begin{multline}
\Pi^{[n]}( x, V) \geq  \frac{1}{\Gamma_n } \int_{G_{n}\cap {\mathcal K}}   \gamma ( z, x ) \mathds{1}_V ( x + c ( z, x ) )\mu (d  z) \\
\geq  \frac{1}{\Gamma_n} \int_{G_{n}\cap {\mathcal K}}   \gamma ( z, x ) \mathds{1}_V ( x + c ( z, x ) )h(z) d z 
\geq \frac{\varepsilon}{\Gamma_n} \int_{G_{n}\cap {\mathcal K}}   \mathds{1}_V ( x + c ( z, x ) ) d z, 
\end{multline}
where $h$ is the Lebesgue density of the absolute continuous part of $\mu .$ Since $ \nabla_z c ( z_0, x_0) $ is of full rank, standard arguments imply that the mapping $ z \mapsto x + c(z, x) $ is locally invertible, locally around $z_0, $ uniformly in $ x $ belonging to a small ball around $x_0,$ and the assertion follows e.g.\ from Lemma 6.3 of \cite{micheletal}. 
\end{proof}

\subsection{Total variation coupling}
We now explain how the  lower bound \eqref{eq:minoration} allows to couple two trajectories $\bar X_t^x $ and $ \bar X_t^y , $ the first issued from $x, $ the second from $y $ at time $t= 0,$ once they have both entered $ C := \{ x \in \R^d  : |x- x_0 | < \eta \} .$ These ideas rely on the regeneration technique introduced by Athreya and Ney  \cite{At78} (1978) and by Nummelin \cite{Nu78} (1978). We apply these ideas here to the jump mechanism. 

Firstly, fix some $n \geq n_0 $ and write $\Pi ( x, dy ) := \Pi^{[n]} (x, dy) $ for the jump transition kernel of \eqref{eq:Pi}. For any fixed $ x, x' \in \R^d , $ let $ \Pi ((x, x'), dy dy') $ be the maximal coupling of $ \Pi ( x, dy ) $ and $ \Pi ( x', dy') .$ Then \eqref{eq:minoration} implies that 
$$ \Pi ((x, x'), dy dy')  \geq \beta 1_{C \times C} (x, x') \nu (dy ) \delta_{y} (dy'). $$

This lower bound implies that once a jump $T_k^{[n]} $ occurs while the two copies of the process are inside the set $C,$ {\it with probability $ \beta, $ is is possible to choose the same  ``after-jump'' position $y$ for the two of them according to the measure $ \nu .$ }

We may therefore introduce a split kernel $Q ((x,x', u),dy dy' ), $ which is a transition kernel from $ \R^d \times \R^d \times [0, 1 ] $ to $\R^d  \times \R^d, $ defined by 
\begin{equation}\label{Q}
Q((x,x',u), dy dy') = \left\{
\begin{array}{ll}
\nu(dy) \delta_{y } (dy')  & \mbox{ if } (x,x',u) \in C \times C  \times [0, \beta]\\
\frac{1}{1 - \beta} \left( \Pi  ((x,x') , dy dy' ) - \beta \nu(dy) \delta_{y} (dy') \right)  & \mbox{ if } (x,x',u) \in C \times C \times ] \beta , 1] \\
\Pi  ((x, x') ,dy dy')  & \mbox{ if } (x,x')  \notin C \times C .
\end{array} \right. 
\end{equation}
Notice that 
$$ \int_0^1 Q((x,x', u), dy dy' )  du = \Pi (( x,x'), dy dy' ) ;$$
in this sense  $Q((x,x', u), dy dy' )$ can be considered as `splitting' the original  kernel $\Pi (( x,x'), dy dy' )$ by means of the additional `color' $u.$ 

We now show how to construct a coupled version of the processes $\bar X^x$ and $\bar X^y$  recursively over time
intervals $[T_k^{[n]} ,  T^{[n]}_{k+1} [ , k \geq 0 .$ For that sake introduce the process $Z^x_t$ defined by 
\begin{equation}\label{eq:z}
Z^x_t = x + \int_0^t g ( Z^x_s ) d s +\sum_{l=1}^m  \int_0^t \sigma_l ( Z^x_s) d  W^l _s + \int_0^t \int_{ G_n^c} \int_0^ { \infty } c( z, Z^x_{s-} ) 1_{ u \le \gamma ( z, Z^x_{s- }) } N (d s, d z, d u ) .
\end{equation}

The coupling construction works as follows.
\begin{enumerate}
\item
We use the same realization of jump times $ T_k^{[n]}, k \geq 0, $ for $\bar X^x$ and $\bar X^y .$ 
\item
We start at time $ t= 0 $ with $\bar X^x_0 = x, \bar X^y_0= y.$ 
\item
Take two independent realizations of $Z^x $ and of $Z^y $ and put 
\begin{equation}
\bar X_t^x := Z_t^x , \; \bar X_t^y := Z_t^y \mbox{ for all }  0 \le t < T_1^{[n]} .
\end{equation}
Notice that $ T_1^{[n]} $ is independent of the rhs of \eqref{eq:z} and exponentially distributed with parameter $\Gamma_n  .$ We put 
$$ \bar X^x_{T_1^{[n]} -  } := Z^x _{T_1^{[n]}  - } , \;  \bar X^y_{T_1^{[n]} -  } := Z^y _{T_1^{[n]}  - } .$$ 
On $\bar X^x_{T_1^{[n]} -  } = x'  $ and $\bar X^y_{T_1^{[n]} -  } = y' $ we do the following. 
\item
We choose a uniform random variable $ U_1 , $ uniformly distributed on $ [0, 1 ], $ independently of anything else. 
\item
On $ U_1 = u , $ we choose a random variable $ V_1 \sim Q ( (x',y', u) , \cdot  ) $ and we put 
\begin{equation}\label{eq:vk}
 (\bar X^x_{T_1^{[n]} } , \bar X^y _{T_1^{[n]} }) := V_1 .
\end{equation}
We then restart the above procedure at item (2) with the new starting point $V_1 $ instead of $(x,y) .$ 
\end{enumerate}

Write $( \bf X^x_t , X^y _t)  $ for the $2 d +1-$dimensional process with additional color $ U_k , $ defined by 
$${ ( \bf X^x_t , X^y _t)  }= \sum_{ k \geq 0 } 1_{ [T_k^{[n]} , T_{k+1}^{[n]} [} (t) ( \bar X^x_t,\bar X^y_t,  U_k ) ,$$
keeping trace of the additional color $U_k.$ In the above formula, we put $ U_0 := 1 $ (during the interval $ [0, T_1^{[n ]}[, $ no coupling attempt is made).  Let
\begin{equation}\label{eq:tauc}
 \tau_c :=  \inf\{ T_k^{[n]} , k \geq 1 : (\bar X^x_{T_k^{[n]} - }, \bar X^y_{T_k^{[n]} - }) \in C \times  C , U_k \le  \beta \} ,
\end{equation}
which is the coupling time of the process. It is clear that by the structure of the splitting kernel $Q ( (x',y', u) , \cdot  ),$ if $ \tau_c < \infty , $ then 
$$ \bar X^x_{\tau_c  } =  \bar X^y_{\tau_c    }  \sim \nu  .$$
Once the two trajectories have met at time $ \tau_c,$ by the Markov property, we may merge them into a single one, and there is no need to continue the above construction, that is, we apply the construction (1) -- (5) described above only up to the time $ \tau_c.$ 

It is clear that in this way the speed of convergence to equilibrium of the process is determined by the moments of the coupling time $\tau_c.$ 
In particular, in order to prove that $ \tau_c < \infty $ almost surely, we have to ensure that joint visits of the set $C  $ by $\bar X_t^x $ and $ \bar X_t^y $ do indeed happen. This is granted by a Lyapunov condition plus a control argument that will be developed in the next section. These arguments will not only imply the finiteness of the coupling time, but also a control of its moments.

\subsection{Lyapunov function }
We introduce the operator 
$$L f(x) = \sum_{i=1}^d \frac{\partial f}{\partial x_i } (x)g^i  ( x) + \frac{1}{2} \sum_{i, j = 1}^d \frac{\partial^2 f}{\partial x_i \partial x_j  } (x) a^{i, j }  ( x)
+ \int_{\R^d } [ f ( x + c (  z, x)) - f(x)] \gamma (  z, x) \mu (dz ),
$$
where   for $ 1 \le i, j \le d, $ 
$  a^{i, j } (x)  =  \sum_{l=1}^m\sigma_l^i \sigma_l^j (x)   ,$ for sufficiently regular test functions $f.$ We impose
\begin{ass}\label{ass:drift}
There exists a continuous function $V : \R^d \to [1, \infty [ $ which belongs to the domain $ {\mathcal D} ({L}) $ of the extended generator ${L}$ of the process $\bar X,$ and constants $ b, c > 0  $ such that for any $x \in \R^d ,$ 
\begin{equation}\label{eq:driftcond0}
{L} V (x) \le - b  V(x) + c \mathds{1}_K  (x) ,
\end{equation}
where $K \subset  \R^d $ is a compact.
\end{ass}

\begin{ex}
Suppose that there exists a compact $ K \subset \R^d, $ such that 
\begin{equation}
Tr ( a (x)) + 2 < g ( x) , x> + 2 \int_{\R^d } < x + c(z,x), c (z, x) > \gamma (z,  x ) \mu (dz )
 \le - c |x| ^2 , 
\end{equation}
for all $x \in K^c .$ Then \eqref{eq:driftcond0} holds for $ V(x) = |x|^2 . $ 

We refer to Section 4 of \cite{Victor} for a detailed discussion of other conditions implying \eqref{eq:driftcond0}. 
\end{ex}

\begin{rem}
Theorem 4.1 of Douc et al. \cite{DFG09} (2009), applied to $ \Phi ( x) = b x $ and $ \delta = 0, $  shows that
\eqref{eq:driftcond0} implies
$$ E_x (e^{ b \tau_K} ) \le V(x) , \mbox{ for } \tau_K = \inf \{ t \geq 0 : \bar X_t \in K\} . $$ 
\end{rem}

\begin{cor}
Under Assumption \ref{ass:drift}, for any coupling of $\bar X_t^x $ and $ \bar X_t^y $ and for   $ \tau_{K \times K} := \inf \{ t \geq 0 : (\bar X_t^x , \bar X_t^y ) \in K \times K \}, $ we have 
$$ E_{x,y} ( e^{b \tau_{K \times K}})  \le V(x) + V(y) + C.$$
\end{cor}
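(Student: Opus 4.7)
\medskip

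\noindent\emph{Proof plan.} The natural strategy is a Lyapunov argument on the product space. Introduce
\[
W(y_1,y_2) := V(y_1)+V(y_2),
\]
and let $Y_t:=(\bar X_t^x,\bar X_t^y)$ under the given coupling. Since $W$ splits additively over the two coordinates and each marginal of $Y_t$ is a copy of the process $\bar X_t$ with generator $L$, the extended generator of any coupling, when applied to $W$, acts marginally, so that Assumption \ref{ass:drift} gives
\[
LW(y_1,y_2)\le -bW(y_1,y_2)+c\mathbf{1}_K(y_1)+c\mathbf{1}_K(y_2).
\]
The key elementary observation is that $\mathbf{1}_K(y_1)+\mathbf{1}_K(y_2)\le 1+\mathbf{1}_{K\times K}(y_1,y_2)$, because the two sides coincide (and equal $2$) on $K\times K$, and the left-hand side is at most $1$ elsewhere. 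Hence
\[
LW(y)\le -bW(y)+c+c\mathbf{1}_{K\times K}(y).
\]

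Write $\tau:=\tau_{K\times K}$ and apply Dynkin's formula to $e^{bs}W(Y_s)$ stopped at $\tau\wedge T\wedge\sigma_n$, with $\sigma_n$ the exit time of the ball of radius $n$ (to localize the unbounded function $V$). Since $Y_s\notin K\times K$ for $s<\tau$, the indicator term $c\mathbf{1}_{K\times K}(Y_s)$ vanishes on $[0,\tau)$, and monotone convergence in $n$ and $T$ (using Assumption \ref{ass:drift} to control moments of $V(Y_{s})$) yields
\[
E_{x,y}\!\left[e^{b\tau}W(Y_\tau)\mathbf{1}_{\tau<\infty}\right]\le V(x)+V(y)+\tfrac{c}{b}\,E_{x,y}\!\left[e^{b\tau}\mathbf{1}_{\tau<\infty}\right].
\]
Since $W\ge 2$ everywhere, this rearranges into an exponential moment bound of the form $E_{x,y}[e^{b\tau}]\le V(x)+V(y)+C$, the additive constant absorbing both the $c/b$ term and the contribution from initial conditions already in $K\times K$ (for which $V(x)+V(y)$ is trivially bounded).

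\medskip

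\noindent\emph{Main obstacle.} The only delicate point is arithmetic: solving the inequality for $E_{x,y}[e^{b\tau}]$ requires that the $c/b$ factor can be absorbed, which is a matter of either (i) replacing $V$ by an equivalent Lyapunov function $\tilde V=\alpha V$ (this rescales $c$ without affecting $b$ or the finiteness of $E_x[e^{b\tau_K}]$) so that effectively $c<b$, or (ii) slightly reducing the exponential rate by an arbitrarily small amount and absorbing the loss in the constant $C$. Either way the structural bound $V(x)+V(y)+C$ is preserved, which is all that matters for the subsequent coupling-time estimates. Apart from this rescaling, the argument is a completely routine application of Dynkin's formula, and the main conceptual input is the inequality $\mathbf{1}_K(y_1)+\mathbf{1}_K(y_2)\le 1+\mathbf{1}_{K\times K}(y_1,y_2)$ that lifts the marginal Lyapunov drift to a genuine Lyapunov drift towards $K\times K$ on the product space.
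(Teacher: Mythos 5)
Your proof follows the same route the paper takes: form the additive product Lyapunov function $\bar V(x,y)=V(x)+V(y)$ and deduce an exponential moment for $\tau_{K\times K}$ from a drift inequality on the product space. The paper's own proof is just one line — ``check that \eqref{eq:driftcond0} holds for $\bar L$'' — and then invokes the Douc--Fort--Guillin bound from the preceding remark; you make this check explicit and in doing so correctly put your finger on the real subtlety: summing the marginal drifts gives $\bar L\bar V\le -b\bar V+c\bigl(\mathbf 1_K(y_1)+\mathbf 1_K(y_2)\bigr)$, and outside $K\times K$ one of the indicators can still be $1$, so the product inequality is $\bar L\bar V\le -b\bar V+c+c\mathbf 1_{K\times K}$, not literally the form \eqref{eq:driftcond0}. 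Your inequality $\mathbf 1_K(y_1)+\mathbf 1_K(y_2)\le 1+\mathbf 1_{K\times K}(y_1,y_2)$ is exactly the right way to express this, and Dynkin on $[0,\tau]$ then gives $(2-c/b)\,E_{x,y}[e^{b\tau}]\le V(x)+V(y)$, which is the claimed form provided $c<2b$. Where your write-up slips is in the proposed repair (i): replacing $V$ by $\tilde V=\alpha V$ rescales $c$ to $\alpha c$ \emph{and} rescales the lower bound $\tilde V\ge\alpha$, so $\alpha$ cancels from the final rearranged inequality and the ratio $c/(2b)$ is unchanged — this rescaling cannot create the condition $c<2b$. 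Repair (ii) (shrinking the rate to $b'=b-c/2$) is closer to what is actually needed, but it only produces a positive rate again under $c<2b$, and it changes the exponent, so the statement would then hold with some $b'\in(0,b)$ rather than with $b$ itself. In short: same approach as the paper, with a genuinely more careful treatment of the drift on the product space, but the proposed constant-absorption argument is not watertight and the issue is, if anything, also glossed over in the paper's one-line proof.
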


\begin{proof}
If suffices to define the $2d-$dimensional Lyapunov function $ \bar V (x, y ) := V(x) + V(y) $ and to check that \eqref{eq:driftcond0} holds for $ \bar L $ where $ \bar L$ denotes the generator of the process $ (\bar X_t^x, \bar X_t^y ).$ 
\end{proof}

As a consequence, under Assumption \ref{ass:drift}, two copies of the process visit the compact $K$ {\bf at the same time} infinitely often, almost surely. 

\subsection{Control}
Once the two copies of the process have entered the compact $K, $ we have to steer them to the target set $C $ appearing in the Doeblin lower bound \eqref{eq:minoration}. This is related to the control properties of the process $ \bar X.$ Since the process $\bar X$ is of infinite jump activity, we start by approximating it by a finite activity process in the following way. 

For any subset $ G \subset \R^d  $ with $\mu ( G) < \infty ,$ we define the process $\XX $ by 
\begin{multline}
\XX_t = x + \sum_{l=1}^m \int_0^t \sigma_l ( \XX_s ) d W_s^l + \int_0^t g ( \XX_s) ds \\
+ \int_{[0, t ]} \int_G \int_{\R_+} c(z, \XX_{s-}) 1_{ \{ u \le \gamma (z, \XX_{s-})\}} N (ds, dz, du ) .
\end{multline}

Then we know that 

\begin{prop}\label{prop:control}[Lemma 6 of \cite{ballyetc}]
Grant Assumption \ref{conditionsbis}. There exists a constant $C> 0 $ such that for any $x \in \R^d $ and $ T > 0, $ 
\begin{equation}\label{eq:control1}
P_x ( \sup_{t \le T} | \XX_t - \bar X_t | \geq \varrho ) \le \frac{T  e}{\varrho} \exp \left( C  T \left[ \sum_l \| \nabla \sigma_l \|_\infty + \| \nabla  g  \|_\infty  + C_{\mu } (\gamma, c )\right]^2    \right)   \alpha ( G^c )  ,
\end{equation}
where $ C_{\mu } (\gamma, c )   $ is defined in \eqref{eq:cmu} and where $ \alpha(G^c) : =\sup_{ x \in \R^d } \int_{G^c} | c (z, x) | \gamma ( z, x) \mu (dz) < \infty 
$ by Assumption \ref{conditionsbis}.
\end{prop}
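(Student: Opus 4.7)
The plan is to estimate $E_x[\sup_{t \le T}|\bar X_t - \XX_t|]$ by a Gronwall-type argument and then apply Markov's inequality. Set $D_t := \bar X_t - \XX_t$. Subtracting the SDE for $\XX$ from the SDE for $\bar X$ (both driven by the same Brownian motions and the same Poisson random measure), $D_t$ decomposes into a drift term $\int_0^t[g(\bar X_s)-g(\XX_s)]ds$, a Brownian martingale $\sum_l\int_0^t[\sigma_l(\bar X_s)-\sigma_l(\XX_s)]dW_s^l$, a ``common jump'' contribution integrated over $G$, and an ``extra jump'' contribution integrated over $G^c$. Only the last term is a genuine source of discrepancy: its absolute value is dominated pathwise by the non-decreasing process
$$Y_t := \int_0^t\!\int_{G^c}\!\int_0^\infty |c(z,\bar X_{s-})|\,1_{u\le\gamma(z,\bar X_{s-})}\,N(ds,dz,du),$$
whose compensator yields $E_x[Y_T] \le T\alpha(G^c)$.

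The three remaining terms are handled via the Lipschitz hypotheses of Assumption \ref{conditionsbis}. The drift contributes $\|\nabla g\|_\infty|D_s|$. For the common jumps, adding and subtracting $c(z,\bar X_s)1_{u\le\gamma(z,\XX_s)}$ and carrying out the $du$ integration gives the bound $|c(z,\bar X_s)|L_\gamma(z)|D_s| + L_c(z)\gamma(z,\XX_s)|D_s|$, which upon integration against $\mu(dz)$ is at most $2C_\mu(\gamma,c)|D_s|$ by definition \eqref{eq:cmu}. For the Brownian martingale, BDG combined with the crude bound $(\int_0^t|D_r|^2 dr)^{1/2}\le\sqrt{t}\sup_{r\le t}|D_r|$ yields a contribution proportional to $\sqrt{t}\bigl(\sum_l\|\nabla\sigma_l\|_\infty\bigr)E_x[\sup_{r\le t}|D_r|]$.

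Setting $\kappa := \sum_l\|\nabla\sigma_l\|_\infty + \|\nabla g\|_\infty + C_\mu(\gamma,c)$, assembling the pieces yields, for every $t \le T$,
$$E_x\!\Bigl[\sup_{s\le t}|D_s|\Bigr] \le t\,\alpha(G^c) + \kappa\!\int_0^t\!E_x\!\Bigl[\sup_{r\le s}|D_r|\Bigr]ds + C\sqrt{t}\,\kappa\,E_x\!\Bigl[\sup_{r\le t}|D_r|\Bigr].$$
Choose $h$ of order $\kappa^{-2}$ so that $C\sqrt{h}\,\kappa \le 1/2$: on any interval of length $h$ the last term can be absorbed into the left-hand side, producing a one-step multiplicative factor bounded by a fixed constant plus an additive source $h\,\alpha(G^c)$. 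Iterating over $\lceil T/h\rceil \lesssim T\kappa^2$ successive intervals, and using the strong Markov property to restart the same estimate on each subinterval with $\XX$ re-initialized at $\bar X^G_{kh}$, gives
$$E_x\!\Bigl[\sup_{t\le T}|D_t|\Bigr] \le T\,e\,\exp(C'T\kappa^2)\,\alpha(G^c),$$
and the conclusion follows from $P_x(\sup_{t\le T}|D_t|\ge\varrho) \le \varrho^{-1}E_x[\sup_{t\le T}|D_t|]$.

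The main obstacle I expect is the clash of integrability classes: the natural source bound $E[Y_T]\le T\alpha(G^c)$ lives in $L^1$, whereas the Brownian martingale $\int(\sigma(\bar X)-\sigma(\XX))dW$ is only amenable to the $L^2$-type BDG estimate; the short-interval absorption trick is precisely what converts this BDG defect into the quadratic $\kappa^2$ exponent in the bound while preserving the $L^1$-type prefactor $\alpha(G^c)$.
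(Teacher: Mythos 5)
The paper does not actually prove this proposition: it cites Lemma~6 of \cite{ballyetc} and leaves the argument there. So there is no ``paper's own proof'' to compare against, but one can still assess your sketch on its merits, and its overall architecture --- compare the two SDEs pathwise, isolate the $G^c$-jumps as the $L^1$-bounded source term with $E[Y_T]\le T\alpha(G^c)$, treat the drift and $G$-jump discrepancies as Lipschitz perturbations with constant $\kappa$, treat the Brownian discrepancy via BDG, run Gronwall, and finish with Markov --- is the right one and almost certainly what \cite{ballyetc} do.

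Two points deserve a flag. First, a small arithmetic slip: the common-jump estimate
$|c(z,\bar X_s)|L_\gamma(z)|D_s| + L_c(z)\gamma(z,\XX_s)|D_s|$ integrated against $\mu(dz)$ is bounded by $C_\mu(\gamma,c)|D_s|$, not $2C_\mu(\gamma,c)|D_s|$, since $C_\mu$ already contains the sum of both integrals. Harmless but worth fixing.

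Second, and more substantively, the iteration step as written is not rigorous. You propose to ``restart the same estimate on each subinterval with $\XX$ re-initialized at $\bar X^G_{kh}$,'' but $\bar X^G$ \emph{is} $\XX$, so this is vacuous; and if you instead meant to reset $\bar X$ to the current position of $\XX$ at each grid time $kh$, you would then be bounding the deviation of $\XX$ from a \emph{different}, piecewise-reset process, not from $\bar X$ itself, so the conclusion would not follow. What does work is to keep the global quantity $\phi(t)=E_x[\sup_{s\le t}|D_s|]$ and handle the BDG term by Young's inequality: with $S=\sup_{s\le t}|D_s|$ one has
\begin{equation*}
\Bigl(\int_0^t|D_s|^2\,ds\Bigr)^{1/2}\le S^{1/2}\Bigl(\int_0^t|D_s|\,ds\Bigr)^{1/2}\le \frac{\epsilon}{2}S+\frac{1}{2\epsilon}\int_0^t|D_s|\,ds,
\end{equation*}
and choosing $\epsilon\sim\kappa^{-1}$ absorbs half of $\phi(t)$ into the left-hand side while the remaining contribution becomes a Gronwall-compatible term of order $\kappa^2\int_0^t\phi(s)\,ds$. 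This yields $\phi(T)\le C\,T\,\alpha(G^c)\exp(CT\kappa^2)$ in one stroke and is what your short-interval ``absorption trick'' is implicitly trying to accomplish; stated this way it also makes the $\kappa^2$ exponent transparent rather than emerging from the count $T/h\sim T\kappa^2$ of iteration steps. With that replacement your sketch is a correct and self-contained proof.
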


\begin{cor}\label{cor:412}
Under the conditions of Proposition \ref{prop:control}, for any fixed time horizon $ T$ and any $ \varrho > 0 $  there exists $ G_T \subset \R^d  $ such that for all $ G $ with $G_T \subset G,$ 
\begin{equation}\label{eq:control2}
 \inf_{ x \in \R^d } P_x (  \sup_{t \le T} | \XX_t - \bar X_t | \le  \varrho ) > 0 .
\end{equation} 
\end{cor}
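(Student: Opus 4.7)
The strategy is to apply Proposition \ref{prop:control} directly and then arrange for the right-hand side of \eqref{eq:control1} to be strictly less than $1$ uniformly in $x$, so that passing to the complementary event yields the corollary with a quantitative lower bound (in fact any constant in $(0,1)$ will do).

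First I would isolate the prefactor in \eqref{eq:control1},
$$
K(T,\varrho) \;:=\; \frac{T\,e}{\varrho}\exp\!\Bigl(CT\bigl[\textstyle\sum_l\|\nabla\sigma_l\|_\infty + \|\nabla g\|_\infty + C_\mu(\gamma,c)\bigr]^2\Bigr),
$$
and observe that under Assumption \ref{conditionsbis} this is a finite constant depending only on $T$, $\varrho$ and the coefficients of the SDE, but neither on $x$ nor on $G$. Proposition \ref{prop:control} therefore supplies the $x$-uniform bound
$$
\sup_{x\in\R^d} P_x\bigl(\sup_{t\le T}|\XX_t-\bar X_t|\ge \varrho\bigr) \;\le\; K(T,\varrho)\,\alpha(G^c).
$$
It now suffices to exhibit a set $G_T\subset\R^m$ such that $\alpha(G_T^c)\le \tfrac{1}{2K(T,\varrho)}$; then for any $G\supset G_T$ we have $\alpha(G^c)\le\alpha(G_T^c)$ by monotonicity of the integrand in $G$, and hence
$$
\inf_{x\in\R^d} P_x\bigl(\sup_{t\le T}|\XX_t-\bar X_t|\le \varrho\bigr) \;\ge\; 1 - K(T,\varrho)\,\alpha(G^c) \;\ge\; \tfrac12 \;>\;0.
$$

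Existence of such a $G_T$ is the only non-routine step and is where I would spend the effort. My plan is to use the exhaustion $(G_n)_n$ of Assumption \ref{conditions2bis} and show that $\alpha(G_n^c)\downarrow 0$ as $n\to\infty$; setting $G_T:=G_{n_0}$ for a sufficiently large $n_0$ then finishes the argument. The delicate point is the \emph{uniform} vanishing: the finiteness of $\sup_{x\in\R^d}\int|c(z,x)|\gamma(z,x)\mu(dz)$ given by Assumption \ref{conditionsbis} guarantees, for each fixed $x$, that $\int_{G_n^c}|c(z,x)|\gamma(z,x)\mu(dz)\to 0$ by dominated convergence, but a pointwise decay is not enough to force the supremum in $x$ to vanish. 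To upgrade to uniform decay I would exploit the Lipschitz bounds from items (2)--(3) of Assumption \ref{conditionsbis}: writing $|c(z,x)|\gamma(z,x)\le |c(z,x_0)|\gamma(z,x_0)+L_c(z)\gamma(z,x_0)|x-x_0|+\ldots$ and using the integrability of $L_c$ and $L_\gamma$ against $\gamma(z,x_0)\mu(dz)$ and $|c(z,x_0)|\mu(dz)$, one reduces the supremum to integrals of a single dominating function for which dominated convergence applies.

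I expect this last uniformity step to be the real obstacle; the remainder is a clean application of Proposition \ref{prop:control}. Note also that the constant $1/2$ appearing in the final bound is arbitrary and could be replaced by any number in $(0,1)$ simply by choosing $n_0$ larger, so the argument in fact gives the stronger statement that $\inf_x P_x(\sup_{t\le T}|\XX_t-\bar X_t|\le \varrho)$ can be made as close to $1$ as desired by enlarging $G$.
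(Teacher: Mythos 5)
The paper states this corollary without proof (it appears as an unproved direct consequence of Proposition~\ref{prop:control}), so there is no ``paper's proof'' to compare against word for word; but your strategy---bound the bad event by $K(T,\varrho)\,\alpha(G^c)$, then choose $G_T$ so large that $\alpha(G_T^c)$ drops below $1/(2K)$, and use monotonicity of $\alpha(\cdot)$ in $G$---is clearly what is intended, and you are right to flag the statement $\alpha(G_n^c)\to 0$ as the sole non-trivial ingredient.

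The gap is in the step where you try to upgrade pointwise decay to decay uniform in $x$. Your Lipschitz expansion
$|c(z,x)|\gamma(z,x)\le |c(z,x_0)|\gamma(z,x_0)+L_c(z)\gamma(z,x_0)|x-x_0|+L_\gamma(z)|c(z,x_0)|\,|x-x_0|+L_c(z)L_\gamma(z)|x-x_0|^2$
cannot produce a dominating function independent of $x$, because the corollary asks for the infimum over \emph{all} of $\R^d$ and the factors $|x-x_0|$ and $|x-x_0|^2$ are unbounded there. Even if one restricts to a compact set $K$ (which is all that is actually used later, in Theorem~\ref{theo:controlXbar}), the cross term forces you to control $\int_{G_n^c} L_c(z)L_\gamma(z)\,\mu(dz)$, and neither the finiteness of $C_\mu(\gamma,c)$ nor Assumption~\ref{conditionsbis}(4)--(5) implies that $L_cL_\gamma$ is $\mu$-integrable (note $\int L_c\gamma\,\mu<\infty$ together with $\gamma\le\Gamma$ gives no lower bound on $\gamma$, so you cannot deduce $\int L_c\,\mu<\infty$). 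Thus the argument as sketched does not close. What would close it is an extra hypothesis of the type ``$|c(z,x)|\gamma(z,x)\le h(z)$ with $h\in L^1(\mu)$'' (a genuine dominating function) or, more simply, the direct hypothesis ``$\alpha(G_n^c)\to 0$ as $n\to\infty$,'' which the paper appears to be assuming implicitly when it writes ``there exists $G_T$.'' Your last remark---that the final constant $1/2$ is arbitrary and can be pushed toward $1$---is correct \emph{conditionally} on that missing uniformity, and is consistent with how the corollary is used.
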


In the following, we shall choose $ \varrho := \eta/ 4$ (recall \eqref{eq:minoration}) and $T := 1 .$ Fix $ G$ such that \eqref{eq:control2} holds with these parameters. 

In a next step we will give conditions ensuring that $ \inf_{x \in K } P_x ( |\XX_1 - x_0 | \le \eta/4 ) > 0 .$ To do so, let us introduce the following objects. Write $\,\tt H\,$ for the Cameron-Martin space of measurable functions ${h}:[0,1]\to \R^k $ having absolutely continuous components ${ h}^\ell(t) = \int_0^t \dot h^\ell(s) ds$ with $\int_0^{1}[{\dot h}^\ell]^2(s) ds < \infty$, $1\le \ell\le k$. For $x\in \R^d $ and ${ h}\in{\tt H}$, consider the deterministic system $ \varphi^{ (h, s,x ) } $ solution of 

\begin{equation}\label{eq:generalcontrolsystem}
 \varphi^{ (h, s,x ) } (t)   =  x + \int_s^t g  (  \varphi^{ (h, s,x ) } (u)  ) du +  \sum_{\ell=1}^k  \int_s^t \sigma_\ell ( \varphi^{ (h, s,x ) }(u) ) \dot h^\ell (u) du. 
\end{equation}
If $ s = 0, $ we write for short $ \varphi^{(h,x)} $ instead of $ \varphi^{(h,0, x) }.$ 

We will impose either the following assumption of strong controllability 

\begin{ass}\label{ass:strongcontrol}
For all $ x \in K, $ there exists $ h \in \tt H $ such that 
$$ \varphi^{(h, x)} (1 ) = x_0 .$$
\end{ass}

Assumption \ref{ass:strongcontrol} is satisfied e.g.\ if the matrix $a$ defined through $ \sum_{l=1}^k \sigma_l^i \sigma_l^j  =   a^{i j }  $ for all $ 1 \le i , j \le d,$ is positive definite on $ K  $ (here we suppose w.l.o.g. that $ x_0 \in  int K $).  
However, if Assumption \ref{ass:strongcontrol} does not happen to be satisfied, we may introduce a weaker condition taking into account the jumps of the process $ \XX.$ For that sake, fix some $n \geq 1 , $ a sequence $ 0 < t_1 < \ldots < t_n < 1 $ as well as a sequence $  (z_1, \ldots , z_n ) $ of elements $ z_k \in G .$ Write for short $ \zz = (z_1, \ldots , z_n ) , \ttt = (t_1, \ldots, t_n ) .$ Consider finally a sequence $ \hh := (h_1, \ldots , h_n )  $ of elements of ${\tt H}$ and introduce the skeleton process $ x_t = x_t ( x, \ttt, \zz, \hh ) $ which is defined on $ [0, 1 ]$ as follows. 
$$  x_t = x_t ( x, \ttt, \zz, \hh )  = \left\{ 
\begin{array}{ll}
\varphi^{(h_1, x)} ( t) &0 \le  t < t_1 \\
x_{t_k-}  + c( z_k, x_{t_k-}   )  & t = t_k , 1 \le k \le n , \\
\varphi^{(h_k, x_{t_k} )} (t- t_k ) & t_k \le  t < t_{k+1 }\wedge 1 , 1 \le k \le n 
\end{array}
\right\} ,$$
where we put $t_{n+1} := \infty $ for convenience. Finally we put $x_1 = x_1 ( x, \ttt, \zz, \hh ) = \varphi^{(h_n, x_{t_n} )} (1- t_n ).$

Then we suppose 
\begin{ass}\label{ass:weakcontrol}
The process $ \XX$ has a minimal jumping rate, i.e., 
$$ \gamma (z, x)  > 0 \mbox{ for all $x$ and for all $ z \in G$},$$ and
for all $ x \in K, $ there exist $n \in \N$ and sequences $ \ttt, \zz , \hh $ such that  $ z_1, \ldots, z_n \in supp ( \mu) \cap G  $ and 
$$ |x_1 ( x, \ttt, \zz, \hh ) - x_0 | \le \eta /4 .$$
\end{ass}

\begin{theo}\label{theo:controlXbar}
Suppose that Assumption \ref{conditionsbis} holds. Grant either Assumption \ref{ass:strongcontrol} or \ref{ass:weakcontrol}. Then 
$$ \inf_{ x \in K } P_x ( | \XX_1 - x_0 |\le \eta/4 ) > 0 .$$ 
\end{theo}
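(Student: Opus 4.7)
The plan is to reduce the claim to the classical Stroock--Varadhan support theorem for the diffusion part, combined with an explicit control of the Poisson jump events, and then to pass from pointwise to uniform positivity by compactness of $K$.

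Under Assumption \ref{ass:strongcontrol}, first observe that since $\mu(G) < \infty$ and $\gamma$ is bounded by $\Gamma$, the total jump intensity of $\XX$ on $[0,1]$ is bounded by $\Gamma\mu(G)$. Hence the event $A_0 = \{N([0,1]\times G\times[0,\Gamma]) = 0\}$ has probability at least $e^{-\Gamma\mu(G)} > 0$, and is independent of the Brownian motions $W^1,\dots,W^k$. On $A_0$, $\XX$ coincides with the pure diffusion $\tilde X^x$ solving $d\tilde X_t = g(\tilde X_t)dt + \sum_l\sigma_l(\tilde X_t)dW^l_t$. For each $x\in K$, fix a control $h^x\in \tt H$ with $\varphi^{(h^x,x)}(1) = x_0$. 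The Stroock--Varadhan support theorem applied to the SDE of $\tilde X^x$ (whose coefficients are $C^1_b$ by Assumption \ref{conditionsbis}(1)) gives $P(\sup_{t\le 1}|\tilde X^x_t - \varphi^{(h^x,x)}(t)|\le \eta/8) > 0$, so that $|\XX_1 - x_0|\le \eta/8$ on this event intersected with $A_0$. Uniformity over $x\in K$ is obtained by a covering argument: by continuous dependence of $\varphi^{(h,x)}$ on the initial condition and of the SDE flow on $x$, a single control $h^{x_*}$ steers a whole neighborhood $U_{x_*}$ of any $x_*\in K$ to within $\eta/4$ of $x_0$ with probability uniformly bounded below in $x\in U_{x_*}$; compactness of $K$ yields finitely many such neighborhoods and hence a positive infimum.

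Under Assumption \ref{ass:weakcontrol}, the idea is to force $\XX$ to shadow the skeleton $x_t(x,\ttt,\zz,\hh)$. Fix $x\in K$ and the corresponding data $(n,\ttt,\zz,\hh)$. Choose small $\delta > 0$ and radii $r_k > 0$ such that $z_k\in \mathrm{supp}(\mu)$ implies $\mu(B(z_k,r_k)) > 0$. The probability that the Poisson random measure $N$ has exactly $n$ atoms in $[0,1]\times G\times[0,\Gamma]$, one in each box $[t_k-\delta,t_k+\delta]\times B(z_k,r_k)\times[0,\Gamma]$, is strictly positive by the explicit form of the Poisson law. On this event, using $\gamma(z,\cdot) > 0$ on $G$, the acceptance-rejection indicator $1_{\{u\le \gamma(z,\XX_{t-})\}}$ has positive probability of triggering an actual jump, with Lipschitz control of the post-jump position through the moduli of $c$ and $\gamma$ in $z$. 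Between consecutive jumps, the Stroock--Varadhan theorem again yields positive probability that the diffusion part stays within $\eta/(8n)$ of the prescribed flow segment $\varphi^{(h_k,\cdot)}$. Combining these estimates through Assumption \ref{conditionsbis}(2), a telescoping argument over the $n+1$ inter-jump intervals and the $n$ jumps provides, with positive probability, $|\XX_1 - x_1(x,\ttt,\zz,\hh)|\le \eta/8$, hence $|\XX_1 - x_0|\le \eta/4$.

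The main obstacle is the uniform lower bound in the weak case, since the skeleton data $(n,\ttt,\zz,\hh)$ depend discontinuously on $x$ (in particular, $n$ may jump). I would handle this as in the strong case by a local-uniformity argument: by continuity of the flow $\varphi^{(h,\cdot)}$ in the initial condition and Lipschitz continuity of $c,\gamma$ in $x$, the same skeleton data $(n,\ttt,\zz,\hh)$ chosen for $x_*$ remain admissible (and still steer the trajectory $\eta/2$-close to $x_0$) in a small neighborhood $U_{x_*}$ of $x_*$, with a positive lower bound on the associated probability that is uniform on $U_{x_*}$. Covering the compact $K$ by finitely many such neighborhoods then yields the desired $\inf_{x\in K}P_x(|\XX_1-x_0|\le \eta/4) > 0$.
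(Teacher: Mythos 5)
Your proof of the strong-control case is essentially the paper's argument: condition on the absence of jumps (in the paper this is the event $\{J_1=0\}$ for the dominating Poisson process), apply the support theorem to the pure diffusion, and conclude by continuity of the flow $\Phi_1(\cdot)$ plus compactness of $K$. Your covering argument is a slight elaboration of the paper's one-line appeal to continuity and compactness, but it amounts to the same thing.

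The weak-control case is where your route diverges from the paper's and where there is a genuine gap. You condition on the event that $N$ has exactly $n$ atoms, one in each small space--time--mark box around $(t_k, z_k)$, and then say that "the acceptance-rejection indicator $1_{\{u\le\gamma(z,\XX_{t-})\}}$ has positive probability of triggering an actual jump." But this is precisely where the argument becomes incomplete: conditioning on the number of atoms of $N$ does not control the number of \emph{actual} jumps of $\XX$, because atoms with $u>\gamma(z,\XX_{t-})$ are rejected and the acceptance set depends on the current (random) position. To make this work you would need to quantify a positive lower bound on $\gamma$ over the relevant $z$-balls and $x$-range and condition further on the $u$-coordinates of the atoms lying below that bound, and also argue that \emph{no} accepted atoms occur outside the prescribed boxes; none of this is in your sketch. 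The paper avoids this bookkeeping entirely via the \emph{real-shocks} representation: conditional on the dominating Poisson clock $J_1=n, T_1=t_1,\dots,T_n=t_n$, it draws sequential marks $Z_k\sim q_G(z,y_k)\mu^*(dz)$ where $q_G$ has an atom at a fictitious mark $z^*$ encoding a rejected shock. This yields a version of $\XX_1$ that (a) has the correct conditional law, (b) makes the acceptance-rejection exact by construction, and (c) is continuous in the initial point $x$, from which positivity of the infimum over $K$ follows directly without your covering argument. Your covering argument is a reasonable substitute in principle, but since it rests on the shadowing-probability estimate being uniformly bounded below on a neighborhood, and that estimate is exactly what the rejection issue leaves unproved, the gap propagates.

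In short: the strong-control half matches the paper; in the weak-control half, the missing ingredient is a clean way to encode "which Poisson atoms actually cause jumps," and the paper's $q_G$-construction with the ghost mark $z^*$ is the device that supplies both the acceptance-rejection control and the continuity in $x$ that you need.
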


\begin{proof}
Recall that $ \sup_{z, x } \gamma ( z, x) \le \Gamma < \infty$ by Item (4) of Assumption \ref{conditionsbis}. 
Therefore, the jumps of $\XX $ occur at most at the jump times of a rate $\mu ( G) \Gamma -$Poisson process that we shall call $ J.$ We work conditionally on $ J_1 = n $ and on the choice of jump times $ T_1 = t_1 < T_2 = t_2 < \ldots < T_n =t_n < 1.$ 

On $\{ J_1 = 0 \}, $  $ \XX $ does not jump during $ [0, 1 ] $ and thus 
\begin{equation}\label{eq:nojump}
 \XX_t = Y_t , \mbox{ for all } t \le 1, 
\end{equation} 
where 
\begin{equation}\label{eq:SDEsanssauts}
  Y_t = x + \sum_l \int_0^t  \sigma_l (Y_s) d W^l_s + \int_0^t g (Y_s) ds =: \Phi_t ( x) .
\end{equation}
Here, $ \Phi_t( x) $ denotes the stochastic flow associated to the above stochastic differential equation. Notice that under our assumptions, $ x \mapsto \Phi_t( x) $ is continuous, see e.g.\ \cite{kunita}. 
Therefore, under Assumption \ref{ass:strongcontrol}, we may conclude as follows: 
$$ P_x ( | \XX_1 - x_0 |\le \eta/4 ) \geq  P_x ( | Y_1  - x_0 |\le \eta/4 ; J_1 = 0  )  = P_x ( | Y_1  - x_0 |\le \eta/4 ) \cdot P ( J_1 = 0 ) > 0 ,$$
due to the support theorem for diffusions, which implies the assertion since $ x \mapsto  \Phi_1 (x) = Y_1  $ is continuous and since $ K$ is compact. 

Suppose now that Assumption \ref{ass:weakcontrol} holds. We work conditionally on $ J_1 = n $ and on $ T_1 = t_1 < \ldots <   T_n = t_n < 1 $ such that $ n , \ttt $ satisfy Assumption \ref{ass:weakcontrol}. Our goal is to construct a version of $ \XX_1, $ conditionally on these choices, which is continuous in the starting point $x.$ This construction relies on the so-called ``real-shocks''-representation of $ \XX_t $ which we define now (cf.\ also to Section 2.2.3 in \cite{ballyetc}).  

During this construction,  we will choose successively random variables $ Z_1, \ldots, Z_n $ and define a process $ x_t ( x, Z_1^n ) ,$ depending on these choices, where $Z_1^n = (Z_1, \ldots , Z_n ) ,$ for $ 0 \le t \le 1.$ This process is defined recursively as follows. Firstly, we put 
$$ x_t =  \Phi_t ( x ) , \mbox{ for all } 0 \le t < t_1 .$$
Then, conditionally on $x_{t_1 -}  = y_1, $ we choose a random variable $ Z_1 $ with law $q_G ( z, y_1 ) \mu^* ( dz) , $ where  
for some fixed $ z^* \in G^c ,$ $ \mu^* (dz ) = \mu ( dz ) + \delta_{z^* } (dz ) $ and 
$$ q_G ( z,y) = \Theta_G ( y) 1_{ z^*} ( z) + \frac{1}{\mu (G) \Gamma } 1_G ( z) \gamma ( z, y) ,$$
 with 
$$ \Theta_G ( y) = 1 - \frac{1}{\mu ( G) \Gamma } \int_G \gamma ( z, y ) \mu (dz) .$$
Then we put 
$$x_{t_1 } := x_{t_1 -} + c(  Z_1, x_{t_1 -} ) 1_G ( Z_1), x_t =  \Phi_{t  - t_1} ( x_{t_1} ) , \mbox{ for all } t_1 \le t < t_2 , $$
and we proceed iteratively by choosing, conditionally on $x_{t_2 -}  = y_2, $ a random variable 
$$ Z_2 \sim q_G ( z, y_2) \mu^* (dz) ,$$
and so on. Finally, we obtain a terminal value $x_1  = \Phi_{1 - t_n } ( x_{t_n} ) .$ It is easy to check that 
$$ {\mathcal L} ( x_1 ( x, Z_1^n  )) = {\mathcal L} ( \XX_1 | J_1 = n , T_1 = t_1, \ldots, T_n = t_n) .$$
Due to the support theorem for diffusions and by continuity of $c (x, z),$ we clearly have that $x_1 ( x,  \ttt, \zz, \hh ) \in supp ({\mathcal L} ( x_1 ( x, Z_1^n  ))) .$ Therefore, Assumption \ref{ass:weakcontrol} implies that for all $x \in K, $
$$ P ( | x_1 ( x, Z_1^n  ) - x_0| \le \eta/4 ) > 0 .$$
The important point is now that the above construction ensures the continuity of 
$$ x \mapsto x_1 ( x,  Z_1^n  ).$$
Thus, by continuity in $x$ and compactness of $ K,$ 
$$ \inf_{x \in K }  P ( | x_1 ( x, Z_1^n  ) - x_0| \le \eta/4 ) > 0 $$
implying the assertion.
\end{proof}

We may now conclude with our main result of this section. Introduce 
$$ \tau_1^{[n]}   = \inf\{ T_k^{[n]} , k \geq 1 : (\bar X^x_{T_k^{[n]} - }, \bar X^y_{T_k^{[n]} - }) \in C \times  C \} .$$

\begin{prop}\label{prop:goodcontrol}
Grant Assumptions \ref{conditionsbis}, \ref{conditions2bis}, \ref{ass:drift} and \ref{ass:strongcontrol} or \ref{ass:weakcontrol}. Then there exists $n_0 $ such that for all $ n \geq n_0, $ for all $ p \geq 1, $ there exists a constant $ C = C( p) $ with 
\begin{equation}\label{eq:goodcontrol}
 E_{x,y} ( (\tau_1^{[n]} )^p ) \le C(p )  [V (x) + V(y ) ] .
\end{equation}   
\end{prop}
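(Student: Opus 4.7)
My plan combines three ingredients: the Lyapunov control of joint returns to $K\times K$ (Step 1), a uniform positive lower bound on the probability that $\tau_1^{[n]}$ occurs within a fixed time after reaching $K\times K$ (Step 2), and an iterative application of the strong Markov property (Step 3). For Step 1, from Assumption \ref{ass:drift} and the corollary stated just before this proposition, I would get $E_{x,y}[e^{b\tau_{K\times K}}]\le V(x)+V(y)+C$, which yields $E_{x,y}[\tau_{K\times K}^p]\le C(p)[V(x)+V(y)+1]$ for every $p\ge 1$. Moreover, Dynkin's formula applied to $V$ together with the Lyapunov inequality yields $E_{x,y}[V(\bar X^x_t)+V(\bar X^y_t)]\le V(x)+V(y)+2ct$, so $V$ stays uniformly bounded in expectation on time intervals of fixed length started from $K\times K$.

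For Step 2, I aim to prove that there exist $n_0$, $T^{\ast}>0$ and $p_0>0$ such that for all $n\ge n_0$,
$$
\inf_{(x,y)\in K\times K}P_{x,y}\bigl(\tau_1^{[n]}\le T^{\ast}\bigr)\ge p_0.
$$
Since in the coupling construction $T_1^{[n]}\sim\mathrm{Exp}(\Gamma_n)$ is independent of the two independent realizations $Z^x,Z^y$ used on $[0,T_1^{[n]})$, and since $\bar X^x_{T_1^{[n]}-}=Z^x_{T_1^{[n]}-}$, Fubini yields, for any $s_0\in(0,T^{\ast})$,
$$
P_{x,y}(\tau_1^{[n]}\le T^{\ast})\ge \int_{s_0}^{T^{\ast}}\Gamma_n e^{-\Gamma_n s}\,P_x(Z^x_s\in C)\,P_y(Z^y_s\in C)\,ds,
$$
so it suffices to prove the uniform lower bound $\inf_{x\in K,\ s\in[s_0,T^{\ast}]}P_x(Z^x_s\in C)\ge q_0>0$ $(\star)$. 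For $(\star)$, my plan is to apply Proposition \ref{prop:control} in the borderline case $G=\emptyset$ directly to $Z^x$: indeed $Z^x$ solves an SDE of the form \eqref{eq:sde0} with the same $g,\sigma_l$ as $\bar X$ but with jump measure truncated to $G_n^c$, and by dominated convergence (using Assumption \ref{conditionsbis}(5)), $\alpha(G_n^c)\to 0$ as $n\to\infty$. Hence the estimate \eqref{eq:control1} gives $P_x(\sup_{t\le T^{\ast}}|Z^x_t-Y_t|\ge \eta/4)\to 0$ uniformly in $x$, where $Y$ is the pure diffusion \eqref{eq:SDEsanssauts}. Combined with the support theorem for $Y$ under Assumption \ref{ass:strongcontrol} (or its jump-augmented variant from the proof of Theorem \ref{theo:controlXbar} under Assumption \ref{ass:weakcontrol}, provided $(G_n)$ is chosen so that the control jumps lie in $G_n^c$ for $n\ge n_0$), this yields $(\star)$ at $s=T^{\ast}$, and a standard continuity argument in $s$ extends it to $[s_0,T^{\ast}]$.

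For Step 3, I would define recursively $\sigma_0:=\tau_{K\times K}$ and $\sigma_{k+1}:=\inf\{t\ge\sigma_k+T^{\ast}:(\bar X^x_t,\bar X^y_t)\in K\times K\}$, and set $N:=\inf\{k\ge 0:\tau_1^{[n]}\le\sigma_k+T^{\ast}\}$. By the strong Markov property at $\sigma_k$ together with Step 2, $P(N>k\mid\mathcal{F}_{\sigma_k})\le 1-p_0$, so $N$ is stochastically dominated by a geometric$(p_0)$ random variable and has all polynomial moments. Writing
$$
\tau_1^{[n]}\le \sigma_N+T^{\ast}\le \tau_{K\times K}+NT^{\ast}+\sum_{k=1}^N\tilde\tau_k,
$$
with $\tilde\tau_k$ the return time to $K\times K$ after $\sigma_{k-1}+T^{\ast}$, I would combine Step 1 (giving uniformly bounded conditional $p$-moments of $\tilde\tau_k$ through the bound $E[V(\bar X^x_{\sigma_{k-1}+T^{\ast}})+V(\bar X^y_{\sigma_{k-1}+T^{\ast}})]\le 2\sup_K V+2cT^{\ast}$), the initial bound $E_{x,y}[\tau_{K\times K}^p]\le C(p)[V(x)+V(y)+1]$, and H\"older's inequality against the moment bound on $N$ to conclude $E_{x,y}[(\tau_1^{[n]})^p]\le C(p)[V(x)+V(y)]$.

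The main obstacle is $(\star)$: the control theorem is formulated for $\bar X$ via the finite-activity approximation $\bar X^G$, whereas I need it for the jump-truncated process $Z^x$, whose law differs from that of $\bar X$ because its big jumps have been removed. The key observation making this work is that, as $n\to\infty$, $Z^x$ approaches the pure diffusion $Y$ in the sense of Proposition \ref{prop:control} applied with $G=\emptyset$, provided $\alpha(G_n^c)\to 0$, which is guaranteed by Assumption \ref{conditionsbis}(5); the delicate point under Assumption \ref{ass:weakcontrol} is to choose the sequence $(G_n)$ so that the jumps required for the control lie in $G_n^c$ for $n$ large.
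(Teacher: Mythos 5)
Your overall architecture matches the paper's: a Lyapunov bound for joint returns to $K\times K$, a uniform control estimate for reaching the target set, and then a geometric-trials argument via the strong Markov property. Step~1 and Step~3 are essentially what the paper does (the unnumbered Corollary preceding Proposition~\ref{prop:goodcontrol}, which bounds the moments of $\tau_c(x,y)=\inf\{t:(\bar X_t^x,\bar X_t^y)\in C'\times C'\}$, combined with a geometric argument along the lines of Proposition~3.6 of \cite{Victor}).

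The point where you genuinely diverge, and where there is a real gap, is Step~2. You propose to steer the \emph{truncated} process $Z^x$ (the dynamics with all jumps from $G_n$ removed) directly into $C$, using the approximation $Z^x\approx Y$ as $n\to\infty$ together with a support/control argument. This is fine under Assumption~\ref{ass:strongcontrol}, where the control is purely diffusive. But under Assumption~\ref{ass:weakcontrol} the control mechanism uses deterministic jumps taken from $\mathrm{supp}(\mu)\cap G$, and those jumps are simply not part of the dynamics of $Z^x$: $Z^x$ only jumps from $G_n^c$. Your fallback, ``choose $(G_n)$ so that the control jumps lie in $G_n^c$ for $n\ge n_0$,'' cannot be implemented: the sets $G_n$ must increase to all of $\R^m$ (Assumption~\ref{conditions2bis}), so $\bigcap_{n\ge n_0}G_n^c=\emptyset$ and no fixed control jump can stay in $G_n^c$ for all $n\ge n_0$; on top of that, your argument needs $\alpha(G_n^c)\to0$, which forces $G_n$ to eventually swallow $G$, i.e.\ exactly the regime where $Z^x$ loses the control jumps. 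So under the weak controllability assumption Step~2 breaks down.

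The paper avoids this issue by never steering $Z^x$. Instead it steers the \emph{full} process $\bar X$ (which retains all jumps, including those from $G$) into the strictly smaller set $C'=\{|x-x_0|<\eta/2\}$ in a fixed unit of time, using Corollary~\ref{cor:412} and Theorem~\ref{theo:controlXbar}, and then invokes Proposition~\ref{prop:33}: starting from $C'\times C'$, the process is still in $C\times C$ at the next big jump $T_1^{[n]}$ with probability at least $1/2$, for $n\ge n_0$. The passage from ``continuous-time position in $C'$'' to ``pre-jump position in $C$'' is thus achieved by the separation $C'\subset\subset C$ together with the fact that $\Gamma_n$ can be taken large so that the big-jump clock rings before the process has had time to exit $C$. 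That extra buffer ($C'$ versus $C$) is precisely the ingredient that lets one control $\bar X$ itself rather than the truncated $Z^x$, and it is the step your proposal is missing.

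Two secondary remarks. First, even under strong control, your inequality $(\star)$ is only established at the single time $s=1$; the continuity argument you invoke gives positivity on a small interval around $1$, not on an arbitrary $[s_0,T^*]$, so one should make the interval explicit (the paper sidesteps this entirely via Proposition~\ref{prop:33}). Second, your Fubini lower bound only captures the event that the \emph{first} big jump succeeds; this is a legitimate lower bound and harmless here because the constant in \eqref{eq:goodcontrol} is allowed to depend on $n$, but it is worth noting that it is not the same quantity as $P_{x,y}(\tau_1^{[n]}\le T^*)$.
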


The proof of Proposition \ref{prop:goodcontrol} is given in the Appendix. 


\subsection{Speed of convergence to equilibrium}
Let us summarize all assumptions needed so far.

\begin{ass}\label{ass:Final!}
\begin{enumerate}
\item We impose Assumption \ref{conditionsbis}, implying the existence of a unique strong solution of \eqref{eq:sde0}.
\item
We impose the non-degeneracy condition Assumption \ref{conditions2}. 
\item
We impose Assumption \ref{conditions2bis} implying that the jump measure is sigma-finite. 
\item
We impose the local Doeblin condition \eqref{eq:minoration}. 
\item 
We impose the Lyapunov type condition Assumption \ref{ass:drift}. 
\item 
We impose the controllability condition Assumption \ref{ass:strongcontrol} or \ref{ass:weakcontrol}. 
\end{enumerate}
\end{ass}

Let $ {\mathcal G} := \{ f : \R^d \to \R  \mbox{ measurable} : \|f\|_\infty \le 1 \}  $ and introduce for any two probability measures $\mu $ and $\nu $ on $(\R^d, {\mathcal B}(\R^d ) )$ the distance 
$$ d_{\mathcal G} ( \mu, \nu ) := \sup_{ f \in {\mathcal G} } | \mu  (f) - \nu (f) | ,$$
which is the total variation distance between $ \mu $ and $ \nu .$ 
 
Write $P_t$ for the transition semigroup of the limit process, i.e., $P_t f (x) = E (f (\bar X_t^x )) .$ Then we have the following result.

\begin{theo}\label{theo:tv}
Grant Assumption \ref{ass:Final!}. Then the process $ \bar X_t$ is positively Harris recurrent with unique invariant probability measure $\pi.$ Moreover, for all $ p \geq 1, $ there exists a constant $C(p) $ such that 
$$ | P_t f(x) - P_t f (y )|  \le [ V(x) + V(y ) ] \| f\|_\infty \frac{C(p)}{t^p } .$$ 
Finally, if $(X_t)_{t \geq 0} $ is any stochastic process defined on $ (\Omega , {\mathcal A}, P)$ satisfying $ \sup_{s \geq 0} E ( V ( X_s ) ) < \infty  $ and if we denote $ \mu_s = {\mathcal L} ( X_s ) $ its law at time $s,$ then for all $ p \geq 1, $ 
$$
 d_{\mathcal G} ( \mu_s P_t  , \pi  ) \le C (p )  t^{ - p },
$$
for all $ s , t \geq 0 ,$ and for a suitable constant $ C (p) $ depending on $p.$  
\end{theo}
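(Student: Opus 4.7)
The approach is pure coupling. My plan is to build the total variation coupling $(\bar X^x_t, \bar X^y_t)$ described in the preceding subsection, bound the polynomial moments of its coupling time $\tau_c$ defined in \eqref{eq:tauc}, and deduce everything from the elementary inequality $|P_t f(x) - P_t f(y)| \le 2 \|f\|_\infty P_{x,y}(\tau_c > t)$.

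The core estimate to establish is $E_{x,y}[\tau_c^p] \le C(p)[V(x) + V(y)]$ for every $p \ge 1$. To this end I would define recursively $\sigma_0 := 0$ and $\sigma_k := \inf\{T_j^{[n]} > \sigma_{k-1} : (\bar X^x_{T_j^{[n]}-}, \bar X^y_{T_j^{[n]}-}) \in C \times C\}$, so that $\tau_c = \sigma_N$ with $N$ a geometric random variable of parameter $\beta$ counting successful attempts. By the strong Markov property applied at $\sigma_{k-1}$ and Proposition \ref{prop:goodcontrol},
\[
E_{x,y}\!\bigl[(\sigma_k - \sigma_{k-1})^p \mid \F_{\sigma_{k-1}}\bigr] \le C(p)\bigl[V(\bar X^x_{\sigma_{k-1}}) + V(\bar X^y_{\sigma_{k-1}})\bigr],
\]
so the conclusion reduces to propagating $V$-moments of the post-jump positions across the geometrically many failed attempts. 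I expect this to be the main obstacle; it should follow from the Lyapunov drift \eqref{eq:driftcond0}, the boundedness of $\gamma$ and of the integrated jump amplitudes in Assumption \ref{conditionsbis}, and the fact that on $\{U_k > \beta\}$ the splitting kernel $Q$ samples from a sub-probability version of $\Pi^{[n]}$ that still respects the $V$-growth estimate. Minkowski's inequality combined with the geometric tail of $N$ then delivers the polynomial moment bound.

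With this in hand, Markov's inequality $P_{x,y}(\tau_c > t) \le E_{x,y}[\tau_c^p]/t^p$ immediately yields the stated pointwise bound on $|P_t f(x) - P_t f(y)|$. Existence of an invariant probability measure $\pi$ follows from positive Harris recurrence, itself a consequence of Assumption \ref{ass:drift} together with the accessibility encoded in \eqref{eq:minoration}; integrating \eqref{eq:driftcond0} against $\pi$ gives moreover $\pi(V) \le c/b$. Uniqueness is immediate from the pointwise bound by letting $t \to \infty$. For the last statement, using invariance $\pi = \pi P_t$,
\[
|(\mu_s P_t)(f) - \pi(f)| = \Bigl|\int \mu_s(dx) \int \pi(dy)\, [P_t f(x) - P_t f(y)]\Bigr| \le \frac{C(p)\|f\|_\infty}{t^p}\bigl[\mu_s(V) + \pi(V)\bigr],
\]
and $\mu_s(V) \le \sup_{s \ge 0} E[V(X_s)] < \infty$ by hypothesis; taking the supremum over $\|f\|_\infty \le 1$ concludes.
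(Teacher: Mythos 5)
Your proof follows essentially the same route as the paper: bound polynomial moments of the coupling time $\tau_c$ via Proposition \ref{prop:goodcontrol}, apply Markov's inequality to get the pointwise rate, and then integrate against $\mu_s(dx)$ and $\pi(dy)$ to obtain the total variation bound. The one point where you are more explicit than the paper is in passing from the moment bound on $\tau_1^{[n]}$ (first joint visit to $C\times C$ at a big jump time) to the moment bound on $\tau_c$ (which also requires the success event $U_k\le\beta$): the paper labels this a ``direct consequence,'' while you correctly recognize that it requires a geometric-accumulation argument over a geometric number of failed coupling attempts, propagating $V$-moments of the post-jump positions across the attempts via the Lyapunov drift; this is indeed the right way to close the gap, and the same scheme underlies the proof of Proposition \ref{prop:goodcontrol} in the Appendix (which in turn refers to Proposition 3.6 of \cite{Victor}).
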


\begin{proof}
The Harris recurrence of $ \bar X_t$ follows from Theorem 2.12 of \cite{Victor}. To prove the second assertion, the main point of the proof is the fact that Proposition \ref{prop:goodcontrol} implies that 
$$ E_{x, y }( \tau_c^p)  \le C(p ) [ V (x) + V(y) ] .$$
Indeed, this is a direct consequence of  \eqref{eq:goodcontrol} together with the definition of the coupling time $\tau_c$ in \eqref{eq:tauc}.
Then 
$$  | P_t f(x) - P_t f (y )|  \le 2 \|f\|_\infty P_{x, y } (\tau_c > t) \le 2 \|f\|_\infty \frac{ E_{x, y } (\tau_c^p) }{t^p } $$
allows to obtain the second assertion. Moreover, notice that by Theorem 4.3 of \cite{Me93}, Assumption \ref{ass:drift} implies in particular that, once we have proven the unique ergodicity of $ \bar X_t$ with invariant probability measure $\pi, $ we necessarily have that $ \pi ( V) < \infty .$ Therefore we obtain, integrating the first assertion against $ \mu_s ( dx) $ and $ \pi ( dy ) , $ that 
$$ d_{\mathcal G} ( \mu_s P_t  , \pi  ) \le [ \pi ( V) + E ( V ( X_s ) ) ] \frac{C(p)}{t^p } ,$$
implying the assertion, since by assumption,  
$ \sup_s E ( V ( X_s ) ) < \infty . $ 
\end{proof}

We are now ready to study the longtime behavior of a time inhomogeneous Markov process having jumps with position dependent jump rate and infinite activity jump activity. 

\section{Longtime behavior of time inhomogeneous PDMP's}\label{sec:2}
We now turn to the main goal of this paper, the study of the longtime behavior of solutions of \eqref{eq:sde}. In order to grant existence and uniqueness of the solution of \eqref{eq:sde}, we impose the following conditions on the coefficients $b,    c  $ and $ \gamma  .$  

\begin{ass}\label{conditions}
\begin{enumerate}
\item
$b(t, x ) $ is globally Lipschitz continuous in $x, $ uniformly in time, that is,  there exists a constant $L_b$ such that $ \sup_{t \geq 0 } | b ( t, x) - b (t, y ) | \le L_b | x - y | ,$  for all $ x, y \in \R^d .$ 
\item
$c$ and $ \gamma$ are Lipschitz continuous with respect to $x,$ uniformly in time, i.e.\ for all $ t > 0, $
$$ |c (t, z, x ) - c (t, z, x') | \le L_c( z ) | x - x ' | \mbox{ and } |\gamma (t,  z, x ) - \gamma (t, z, x') | \le L_{\gamma}( z ) | x - x ' |,$$
where $ L_c, L_{\gamma} $ are measurable functions from $  {\R^m}  \to \R_+ .$ 
\item
For all $ T > 0, $ 
$ \sup_{x \in \R^d } \sup_{0 \le t \le T  } \int_{{\R^m} }( L_{c} ( z)  \gamma (t, z,x) +  L_{\gamma} ( z) |c (t, z,x) |) \mu  (d z)  < \infty .$
\item
For all $ T > 0 ,$  we have that $\sup_{ 0 \le t \le T  } \sup_x \int_{{\R^m} }  \gamma (t, z,x)   |c (t, z,x) |  \mu   (d  z )< \infty .$
\end{enumerate}
\end{ass}

Under these assumptions, we may still apply Theorem 1.2.\ of \cite{carl} to guarantee that  \eqref{eq:sde} admits a unique strong non-explosive adapted solution which is Markov, having c\`adl\`ag trajectories.  In the following, for any $ t_0 \geq 0, x \in \R^d , $ we shall write $ X_{t_0, t }^x , t \geq t_0, $ for a version of the above process starting from the position $ x  $ at time $t_0.$

Our aim is to show  that, as $t \to \infty , $ under suitable conditions, the time-inhomogeneous process $X_{t_0, t}^x$ of \eqref{eq:sde} converges to the time homogeneous limit process solving equation \eqref{eq:sde0} of Section \ref{sec:1}. 
In order to identify the limit process, we have to distinguish the three possible jump regimes that we have discussed in the introduction, the slow, intermediate and fast regime. 

\begin{ass}\label{ass:1}
For all $t \geq 0, $ there exists a measurable partition $ (E^l_t, l=1, 2, 3) $ of $ \R^m $ such that $ E_t^i \cap E_t^j = \emptyset $ for all $ i \neq j $ and $ E_t^1 \cup E_t^2 \cup E_t^3 = \R^m,$ with the following  properties.

1. (Fast regime) For all $ x \in \R^d ,$
\begin{equation}\label{eq:et1}
\int_{E_t^1 } c(t,z, x) \gamma (t, z, x) \mu ( dz) = 0 ,  \; \lim_{t \to \infty } \int_{E_t^1 } |c(t,z, x)|^3  \gamma (t, z, x) \mu ( dz) = 0 . 
\end{equation}
Moreover, there exists a measurable function $a : \R^d \to \R^{d \times d } $ such that 
\begin{equation}\label{eq:at}
 a^{i j } (t, x) = \int_{E_t^1 } c^i (t, z, x ) c^j (t, z, x) \gamma ( t, z, x ) \mu (dz ), 1 \le i, j \le d, 
\end{equation}
satisfies
$\sup_{ t \geq t_0}  | a (t, x ) - a (x)  | \to 0 $
as $t_0 \to \infty .$ \\

2. (Intermediate regime) For all $x \in \R^d , $ 
\begin{equation}\label{eq:et2}
\lim_{t \to \infty } \int_{E_t^2 } |c(t,z, x)|^2  \gamma (t, z, x) \mu ( dz) = 0 . 
\end{equation}
Moreover, there exists a measurable function $b_2 : \R^d \to \R^{d  } $ such that 
\begin{equation}\label{eq:bt}
 \tilde b (t, x) = \int_{E_t^2 } c (t, z, x )  \gamma ( t, z, x ) \mu (dz ) 
\end{equation}
satisfies
$\sup_{ t \geq t_0}  | \tilde b (t, x ) - b_2 (x)  | \to 0 $
as $t_0 \to \infty .$ \\

3. (Slow regime) There exist measurable functions $\gamma (z, x ) \geq 0 $ and $ c(z, x) $ such that 
\begin{equation}\label{eq:ct}
\sup_{ t \geq t_0}  \int_{E_t^3}  \Big( | \gamma (t, z, x ) -  \gamma ( z, x) | 
+  \gamma ( z, x )   | c ( t, z, x ) -  c ( z, x) | \Big)   \mu (dz ) \to 0 
\end{equation}
as $t_0 \to \infty , $ for all $ x \in \R^d .$ \\

4. There exists a measurable function $b_1 : \R^d \to \R^{d  } $ such that 
$
 \sup_{ t \geq t_0}  | b (t, x ) - b_1 (x)  | \to 0 
$
as $t_0 \to \infty .$ \\

5. Introducing 
\begin{multline}
 \varepsilon ( x, t_0) = \sup_{ t \geq t_0 }  \int_{E_t^1}  | c (t, z, x) |^3 \gamma (t, z, x) \mu (dz) + \sup_{t \geq t_0}  \int_{E_t^2}  | c (t, z, x ) |^2 \gamma ( t, z, x ) \mu  (dz ) \\
 + \sup_{ t \geq t_0} [ | a (t, x ) - a (x)  |  +|b ( t, x ) + \tilde b ( t, x)  -  g(x)|  ]  \\
+ \sup_{ t \geq t_0}  \int_{E_t^3}  \Big( | \gamma (t, z, x ) -  \gamma ( z, x) | 
+  \gamma ( z, x )   | c ( t, z, x ) -  c ( z, x) | \Big)   \mu (dz ) ,
\end{multline}
there exist $C,  r > 0 $ such that $ \varepsilon (x,t) \le C [1+|x|] e^{-rt}  .$\\

6. Let $\sigma_l , 1 \le l \le k, $ be such that $  a^{i j } (x)  =  \sum_{l=1}^m\sigma_l^i \sigma_l^j (x)  $ for all $ 1 \le i, j \le d .$ Put $ g ( x) = b_1 ( x) + b_2 (x) .$ Then $\sigma_l, g , c $ and $\gamma $ are such that  Assumption \ref{ass:Final!} holds. 
\end{ass}

With $L_t $ the generator of \eqref{eq:sde} as in \eqref{eq:gen}, it is immediate to see that the following result holds true.

\begin{prop}
Suppose that the coefficients of  the stochastic differential equations \eqref{eq:sde} satisfy Assumption \ref{conditions}. Grant moreover Assumption \ref{ass:1}. Then there exists a constant $C > 0 $ such that for any function $f \in C_b^3 ( \R^d ) $ 
$$ | Lf (x) - L_t f (x)  | \le C e^{-rt } [1+|x|]  \|f\|_{3, \infty } .$$  
\end{prop}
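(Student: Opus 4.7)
The plan is to decompose the difference $L_t f(x) - L f(x)$ according to the partition $E_t^1 \cup E_t^2 \cup E_t^3 = \R^m$ of the jump noise space, handle each piece via a Taylor expansion of the appropriate order, and then use the quantitative assumption on $\varepsilon(x,t)$ to get the exponential rate.

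\medskip

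\noindent\textbf{Step 1 (Drift and slow regime).} I would first write
\[
L_t f(x) = \sum_i \partial_i f(x)\, b^i(t,x) + \sum_{l=1}^3 \int_{E_t^l}[f(x+c(t,z,x))-f(x)]\gamma(t,z,x)\mu(dz).
\]
For the slow piece $l=3$, I add and subtract $[f(x+c(z,x))-f(x)]\gamma(z,x)$. A first-order Taylor estimate on $f(x+c(t,z,x))-f(x+c(z,x))$ together with the triangle inequality bounds the difference by $\|f\|_{1,\infty}\int_{E_t^3}(|\gamma(t,z,x)-\gamma(z,x)|+\gamma(z,x)|c(t,z,x)-c(z,x)|)\mu(dz)$, which is one of the pieces of $\varepsilon(x,t_0)$.

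\medskip

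\noindent\textbf{Step 2 (Intermediate regime).} For $l=2$, I expand
\[
f(x+c(t,z,x))-f(x) = \sum_i \partial_i f(x)\, c^i(t,z,x) + R_2(t,z,x),
\]
with $|R_2(t,z,x)|\le \tfrac{1}{2}\|f\|_{2,\infty}|c(t,z,x)|^2$. Integrating against $\gamma(t,z,x)\mu(dz)$ over $E_t^2$ produces the drift $\sum_i \partial_i f(x)\,\tilde b^i(t,x)$ plus a remainder controlled by $\|f\|_{2,\infty}\int_{E_t^2}|c(t,z,x)|^2\gamma(t,z,x)\mu(dz)$, again a piece of $\varepsilon(x,t_0)$.

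\medskip

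\noindent\textbf{Step 3 (Fast regime).} For $l=1$, I push the Taylor expansion one order further:
\[
f(x+c(t,z,x))-f(x)=\sum_i \partial_i f(x)c^i(t,z,x)+\tfrac12 \sum_{i,j}\partial_{ij}f(x) c^i c^j(t,z,x)+ R_3,
\]
with $|R_3|\le \tfrac{1}{6}\|f\|_{3,\infty}|c(t,z,x)|^3$. Integrating over $E_t^1$ against $\gamma(t,z,x)\mu(dz)$: the first-order term vanishes by \eqref{eq:et1}; the second-order term yields $\tfrac12 \sum_{ij}\partial_{ij}f(x)\, a^{ij}(t,x)$; and the remainder is bounded by $\tfrac{1}{6}\|f\|_{3,\infty}\int_{E_t^1}|c(t,z,x)|^3\gamma(t,z,x)\mu(dz)$, the last piece of $\varepsilon$.

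\medskip

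\noindent\textbf{Step 4 (Assembly).} Summing the three contributions, $L_t f(x)$ equals
\[
\sum_i \partial_i f(x)\,[b^i(t,x)+\tilde b^i(t,x)] + \tfrac12 \sum_{ij}\partial_{ij}f(x)\,a^{ij}(t,x)+ \int_{E_t^3}[f(x+c(z,x))-f(x)]\gamma(z,x)\mu(dz)
\]
plus an error bounded by $\|f\|_{3,\infty}\,\varepsilon(x,t)$. Subtracting $Lf(x)$, the drifts produce $|b(t,x)+\tilde b(t,x)-g(x)|\cdot\|f\|_{1,\infty}$, the diffusion coefficients produce $|a(t,x)-a(x)|\cdot\|f\|_{2,\infty}$, and the slow-regime integrals fit directly into the last line of $\varepsilon(x,t)$ (I also need to note that $\int_{(E_t^3)^c}[\cdots]\gamma(z,x)\mu(dz)$ with the limiting coefficients can be absorbed similarly since the partition covers $\R^m$, so actually I should match against the limit $\int_{\R^m}$ via the same estimate; this is an easy triangle-inequality bookkeeping). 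All pieces are controlled by $\|f\|_{3,\infty}\,\varepsilon(x,t)$, and Assumption \ref{ass:1}(5) gives $\varepsilon(x,t)\le C[1+|x|]e^{-rt}$.

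\medskip

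\noindent\textbf{Main obstacle.} The only subtle bookkeeping point is that the limit generator $L$ integrates the slow jump term over \emph{all} of $\R^m$, whereas at time $t$ we only have the integral over $E_t^3$; matching these requires observing that the regime classification assumption controls exactly the relevant differences uniformly in $x$ via $\varepsilon$, so no additional argument is needed beyond careful triangle-inequality accounting.
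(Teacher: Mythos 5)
The paper omits the proof, declaring the proposition ``immediate''; your decomposition by regime with Taylor expansions of orders $1$, $2$, $3$ on $E_t^3$, $E_t^2$, $E_t^1$ respectively, followed by assembly against $\varepsilon(x,t)$ of Assumption \ref{ass:1}(5), is exactly the intended argument, and Steps 1--3 are carried out correctly. The norm accounting is also fine: $\|f\|_{1,\infty}$, $\|f\|_{2,\infty}$, $\|f\|_{3,\infty}$ all lie below $\|f\|_{3,\infty}$, so every error term is of the claimed form.

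The one place where you should be more careful is precisely the ``main obstacle'' you flag at the end. After your assembly in Step 4, the leftover term is
\[
\int_{E_t^1\cup E_t^2}\bigl[f(x+c(z,x))-f(x)\bigr]\gamma(z,x)\,\mu(dz),
\]
and this is \emph{not} covered by any summand of $\varepsilon(x,t_0)$ as written: every integral in Assumption \ref{ass:1}(5) either involves the time-$t$ coefficients on $E_t^1,E_t^2$ or restricts to $E_t^3$. Your claim that ``no additional argument is needed beyond careful triangle-inequality accounting'' is therefore slightly too optimistic. What is actually required is the implicit convention that the limit jump intensity $\gamma(z,x)\mu(dz)$ places no mass on $E_t^1\cup E_t^2$ for $t\ge t_0$ (equivalently, $\gamma(\cdot,x)\mu$ is supported in $\bigcap_{t\ge t_0} E_t^3$). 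This is consistent with the paper's examples -- in the Cox--Ingersoll--Ross example the slow-regime limit is supported on $]0,\infty[\subset E_t^3$ for every $t$ -- but it is an assumption being used, not a consequence of triangle-inequality bookkeeping. Making that convention explicit would close the gap; with it, your proof is complete.
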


Therefore, the infinitesimal generator of \eqref{eq:sde} converges to the one of the limit process, if Assumption \ref{ass:1} holds. If the limit semigroup $P_t f(x) = E ( f ( \bar X_t^x )) $ satisfies suitable regularity conditions, this implies the convergence of the associated semigroups (see e.g.\ \cite{ballyetc}, \cite{bouguetetal}). This regularity of $P_t f (x)$ is actually delicate to show due to the presence of the position dependent jump rate $\gamma ( z, x ) .$ We refer to  \cite{ballyetc} for a thorough study on which we rely in the sequel. 

\subsection{Asymptotic pseudotrajectories}
To prove the convergence of the time dependent process to the limit process, we shall need both assumptions on the coefficients of the limit semigroup as well as on the time dependent approximating semigroup $L_t .$ 

{\bf Conditions on the limit process.} To state the conditions on the limit process in the presence of the position dependent jump rate $ \gamma ( z, x) ,$ we have to introduce the following notation. 
For a function $ f : \R^m  \times \R^d \to   \R$ which is $ q -$times differentiable with respect to $x, $ we write for any $ p \geq 1,$ for any time horizon $ T > 0$  and for any constant $C > 0, $ 
\begin{eqnarray}
|f|_{p} &=& \sup_{x \in \R^d } \left( \int_{\R^m }  | f (z, x) |^p \gamma ( z, x) \mu ( dz)\right)^{1/p},  \\
{[ f ]}_{ p } &=& \sup_{1 \le p' \le p } |f|_{ p'} , \\
\theta_{(q,p)}  &=& 1 + \| \sigma \|_{2, q, \infty} + \| g \|_{2, q , \infty}  + \sum_{ 2 \le |\alpha | \le q } [ \partial^\alpha_x c]_{ p},\\
\alpha_p  &=& \| \nabla \sigma\|^2_\infty + \| \nabla g \|_\infty + [ \nabla_x c]^p_{ p } ,\label{eq:alphap}\\
\alpha_{q, p } ( C, T  ) &=& C \theta^q_{q, p }  \exp{ (C T  q \sum_{ 1 \le n \le q } \frac1n \alpha_{ p \cdot q }  ) } ,\\
\Gamma_{ q} ( \gamma) &=& \sup_{x \in \R^d } \sum_{l=1}^q \sum_{ 1 \le |\alpha | \le l } \left( \int_{\R^m} | \partial^\alpha_x \ln \gamma (z, x) |^{l/ | \alpha|} \gamma (z, x) \mu (dz) \right)^{q/l} .
\end{eqnarray}
In the following, we will apply the above notations with $ q=3, p=12$ and write
\begin{equation}\label{eq:q3}
 Q_3(P, T ) := \alpha^{6 }_{3, 12 } ( C, T  ) \times \left( 1+ \Gamma_{ 3} ( \gamma) + \sum_{ 0 \le |\beta | \le 3 } [ \partial^\beta_x \ln \gamma ]_{ 12 } \right)^3 .
\end{equation}

{\bf Conditions on the time inhomogeneous approximating process.} To begin with, we impose the following condition implying that the time inhomogeneous process  $ X_{t_0, t }^x $ is $1-$ultimately bounded. 

\begin{ass}\label{unifbounded}
There exists a constant $C > 0 $ such that for all $ t_0 > 0 , $ 
\begin{equation}\label{eq:ultimativelybounded}
 \sup_{t \geq t_0 } E ( | X_{t_0, t }^x  |) \le C  ( 1 + |x|)  .
\end{equation} 
\end{ass} 

\begin{rem}
To check \eqref{eq:ultimativelybounded}, it is sufficient to impose a Lyapunov condition. 
Suppose e.g.\ that there exists a function $ V : \R^d \to \R_+ $ with $ E ( V ( X_{t_0, t }^x )) < \infty $ for all $ t \geq t_0 $ and with $ V( x) \geq |x| $ if $ |x| \geq K ,$ where $ K$ is some fixed constant. Assume that $V$ is a Lyapunov function in the sense that there exist $ \alpha , \beta > 0 $ such that 
\begin{equation}\label{eq:lyapunovlt}
 L_t V ( x) \le - \alpha V( x) + \beta 
\end{equation}  
for all $ t \geq 0 . $ Then \eqref{eq:ultimativelybounded} holds.  
\end{rem}

Finally, in order to control the regularity of the approximating semigroup, we introduce
\begin{eqnarray*}
\Phi_1 (t, z, x)&=& \left[ | \nabla_x \gamma (t, z, x) | |c ( t, z, x) |^2  + (   | \nabla_x c |^2 + |c|^2 ) \gamma (t, z, x)\right] 1_{E_t^1 } (z) ,\\
\Phi_2 (t, z, x)&=& \left[ | \nabla_x \gamma (t, z, x) | |c ( t, z, x) | + ( | \nabla_x c | |c| + | \nabla_x c | ) \gamma (t, z, x)\right]  1_{E_t^2 \cup E_t^3  } (z),
\end{eqnarray*}
and we suppose that 
\begin{equation}\label{eq:ct}
C_{t_0} := \sup_{ t \geq t_0} \sup_{x \in \R^d } \sum_{i=1}^2 \int_{\R^m} \Phi_i ( t, z, x) \mu  ( dz) < \infty 
\end{equation}
for some $t_0 > 0.$ 

{\bf Asymptotic pseudotrajectories}
We state our convergence result in terms of asymptotic pseudotrajectories introduced in Bena\"im and Hirsch  \cite{bh96} (1996), see also Bena\"im et al. \cite{bouguetetal} (2016). This notion provides an efficient tool to describe the long time behavior of time dependent processes. Consider the class of test functions 
$$ {\mathcal F} = \{ f : \R^d \to \R : \| f\|_{3, \infty} \le  1 \} ,$$
and introduce for any two probability measures $ \mu $ and $\nu $ on $(\R^d , {\mathcal B} ( \R^d ) ) $ the associated distance
$$ d_{\mathcal F} ( \mu , \nu ) := \sup_{ f \in {\mathcal F}} | \mu ( f) - \nu ( f) | .$$
For $X_t $ a solution of \eqref{eq:sde}, starting from any arbitrary initial law $ {\mathcal L} ( X_0) , $ let 
$$ \mu_t := {\mathcal L} (X_t) $$
and recall that $P_t$ denotes the transition semigroup of the limit process \eqref{eq:sde0}. The following result is our main result.

\begin{theo}\label{cor:1}
Suppose that the coefficients of \eqref{eq:sde} satisfy Assumption \ref{conditions} and that $ E ( |X_0 |) < \infty .$ Suppose moreover that $ Q_3 (P, T ) < \infty $ for all $ T > 0 $ and that $C_{t_0 } < \infty $ for some $t_0 > 0 .$ Finally, grant Assumptions \ref{ass:1} and  \ref{unifbounded}. Then there exists a constant $M_1$ such that for any $ T < \infty , $ 
\begin{equation}\label{eq:217}
\sup_{s \le T} d_{\mathcal F} ( \mu_{t + s } , \mu_t P_s ) \le C e^{ M_1  T}  \int_t^{t+T} e^{-rs}  ds  .
\end{equation}
In particular,  
$$ \lim_{t \to \infty } \sup_{s \le T} d_{\mathcal F} ( \mu_{t + s } , \mu_t P_s ) = 0,$$
thus $ ( \mu_t)_t $ is an asymptotic pseudotrajectory of $ (P_t)_t $ in the sense of \cite{bh96}. 
\end{theo}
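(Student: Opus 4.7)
The natural strategy is a standard interpolation in the spirit of the Trotter--Kato identity, combined with the generator bound $|(L-L_t)f(x)| \le Ce^{-rt}[1+|x|]\|f\|_{3,\infty}$ established in the proposition preceding the theorem. Fix $f \in \mathcal{F}$, a time horizon $T$ and $s \le T$. The plan is to write
\[
\mu_{t+s}(f) - \mu_t P_s(f) \;=\; \int_0^s \frac{d}{du}\bigl[\mu_{t+u}(P_{s-u}f)\bigr]\,du,
\]
and then to show that, in a suitable sense,
\[
\frac{d}{du}\bigl[\mu_{t+u}(P_{s-u}f)\bigr] \;=\; \mu_{t+u}\bigl((L_{t+u}-L)\,P_{s-u}f\bigr).
\]
Formally this follows because applying $L_{t+u}$ to $P_{s-u}f$ corresponds to differentiating $\mu_{t+u}$, while applying $L$ corresponds to differentiating $P_{s-u}$. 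To make this rigorous I would apply It\^o's formula to $P_{s-u}f(X_r)$ on $[t,t+s]$ (the test function must belong to the domain of $L_{t+u}$ for each $u$, which is guaranteed once we know $P_{s-u}f \in C_b^3(\mathbb{R}^d)$), take expectations, and reorganise the resulting identity.

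\textbf{Bounding the integrand.} Substituting the generator estimate with test function $P_{s-u}f$ yields
\[
\bigl|(L_{t+u}-L)\,P_{s-u}f(x)\bigr| \;\le\; C e^{-r(t+u)}(1+|x|)\,\|P_{s-u}f\|_{3,\infty}.
\]
Here the assumption $Q_3(P,T) < \infty$ (which encodes the regularity results for the semigroup $P_t$ proved in \cite{ballyetc} in the presence of position dependent jump rates) provides the key a priori estimate: for all $v \le T$,
\[
\|P_v f\|_{3,\infty} \;\le\; Q_3(P,T)\,\|f\|_{3,\infty} \;\le\; Q_3(P,T).
\]
Integrating against $\mu_{t+u}$ and using Assumption \ref{unifbounded} together with $\mathbb{E}|X_0|<\infty$ to control $\sup_{r\ge 0}\mathbb{E}(1+|X_r|) \le M$ uniformly, we obtain
\[
\bigl|\mu_{t+u}\bigl((L_{t+u}-L)P_{s-u}f\bigr)\bigr| \;\le\; C\,M\,Q_3(P,T)\,e^{-r(t+u)}.
\]
Integrating in $u$ over $[0,s]$ and taking the supremum over $s\le T$ and $f \in \mathcal{F}$ yields
\[
\sup_{s\le T} d_{\mathcal{F}}(\mu_{t+s},\mu_t P_s) \;\le\; CM\,Q_3(P,T)\int_t^{t+T} e^{-r u}\,du,
\]
which is the claimed bound (with $e^{M_1 T}$ absorbing $Q_3(P,T)$, noting that $Q_3(P,T)$ grows exponentially in $T$ through the factor $\alpha_{3,12}^6(C,T)$). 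The asymptotic pseudotrajectory property follows immediately since the right-hand side is bounded by $(CM\,Q_3(P,T)/r)e^{-rt}\to 0$ as $t\to\infty$.

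\textbf{Main obstacle.} The only delicate step is the It\^o/interpolation identity: one needs $P_{s-u}f \in C_b^3(\mathbb{R}^d)$ uniformly in $u \in [0,s]$ so that it lies in the domain of each $L_{t+u}$, and one needs sufficient integrability of the jump terms in It\^o's formula applied to $P_{s-u}f(X_r)$ to make the martingale part centered. The former is precisely what $Q_3(P,T)<\infty$ provides (this is the non-trivial input from \cite{ballyetc}, necessary because the position-dependent jump rate $\gamma(z,x)$ destroys naive regularity preservation), and the latter follows from condition $C_{t_0}<\infty$, which controls $\int \Phi_i(t,z,x)\mu(dz)$ and hence ensures that the jump compensator applied to derivatives up to order two of $P_{s-u}f$ is integrable. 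Once these two regularity/integrability ingredients are secured, the computation above is essentially mechanical.
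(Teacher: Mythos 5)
Your proof is correct and follows essentially the same route as the paper: the paper's argument consists of citing Theorem 16 of \cite{ballyetc} for the pointwise bound $|E(f(X^x_{t_0,t})) - E(f(\bar X^x_{t_0,t}))| \le C\,Q_3(P,T)\,\psi(x)\,\|f\|_{3,\infty}\,[(t-t_0)\vee 1]\int_{t_0}^t e^{-ru}\,du$ (whose hypotheses are checked via Theorem 14 of \cite{ballyetc} for semigroup regularity and the observation that $C_{t_0}<\infty$ gives condition (43) of that reference) and then integrating against $\mu_t(dx)$ using Assumption \ref{unifbounded}. What you do is unpack the Trotter--Kato interpolation identity that underlies the cited Theorem 16, identifying correctly the two ingredients needed to make it rigorous ($Q_3<\infty$ for $\|P_{s-u}f\|_{3,\infty}$, $C_{t_0}<\infty$ for integrability of the jump terms), so your argument is the same at one level deeper rather than a genuinely different proof.
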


The proof of Theorem \ref{cor:1} is given in the Appendix.

Since according to Theorem \ref{cor:1}, $ X_{t_0, t}^x $ is a good approximation of $ \bar X_{t_0, t }^x $ as $t_0$ tends to infinity, it is natural to study to which extent $ X_{t_0, t}^x  $ approaches the invariant regime of the limit process, as time tends to infinity. Recall that $P_t$ denotes the limit semigroup and $\pi $ the associated invariant probability measure, which exists according to Theorem \ref{theo:tv}. Finally, recall that $ {\mathcal G} = \{ f : \R^d \to \R \mbox{ measurable such that } \| f \|_\infty \le 1 \} .$ The following theorem is an immediate consequence of Theorem \ref{theo:tv}. 

\begin{theo}\label{theo:2}
Grant Assumption \ref{ass:Final!} and the assumptions of Theorem \ref{cor:1}. Let $V(x)$ be the Lyapunov function of Assumption \ref{ass:drift} and suppose that there exists a constant $C$ such that for all $ t_0 > 0 , $ 
$$ \sup_{t \geq t_0 } E ( V( X_{t_0, t }^x ) ) \le C  ( 1 + V(x) )  .$$
Then for all $ p \geq 1 ,$ for a  constant $C = C(p) $ depending on $p$ and on ${\mathcal L} ( X_0) ,$  we have
\begin{equation}\label{eq:controlg}
 d_{\mathcal G} ( \mu_s P_t  , \pi  ) \le C t^{ - p },
\end{equation} 
for all $ s , t \geq 0 . $  
\end{theo}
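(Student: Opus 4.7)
The plan is to deduce Theorem \ref{theo:2} as an essentially immediate corollary of the final assertion of Theorem \ref{theo:tv}. That statement already provides, for any stochastic process $(X_t)_{t \ge 0}$ satisfying $\sup_{s \ge 0} E(V(X_s)) < \infty$, the polynomial bound $d_{\mathcal G}(\mu_s P_t, \pi) \le C(p) t^{-p}$. So the whole task reduces to verifying this uniform $V$-moment bound for the time inhomogeneous solution $X_t$ of \eqref{eq:sde}, with the constant depending only on $p$ and on $\mathcal L(X_0)$.

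First I would use the Markov property to write, for every $s \ge 0$,
\[
E(V(X_s)) = \int_{\R^d} E(V(X_{0,s}^x)) \, \mathcal L(X_0)(dx),
\]
and then apply the standing hypothesis $\sup_{t \ge t_0} E(V(X_{t_0,t}^x)) \le C(1+V(x))$ with $t_0 = 0$ to obtain
\[
\sup_{s \ge 0} E(V(X_s)) \le C\bigl(1 + E(V(X_0))\bigr).
\]
The finiteness of $E(V(X_0))$ is implicit in the statement and is precisely the reason why the constant $C(p)$ in \eqref{eq:controlg} is allowed to depend on the initial law $\mathcal L(X_0)$. If one insists on reading the hypothesis strictly for $t_0 > 0$, the same conclusion is recovered by letting $t_0 \downarrow 0$ and using the right-continuity of $t \mapsto X_t$ together with Fatou's lemma.

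Second, with this uniform moment bound secured, the final statement of Theorem \ref{theo:tv} applies verbatim to the process $(X_t)_{t \ge 0}$ and delivers
\[
d_{\mathcal G}(\mu_s P_t, \pi) \le C(p) t^{-p}
\]
for every $s, t \ge 0$, which is exactly \eqref{eq:controlg}. Note that Assumption \ref{ass:Final!} is what guarantees the applicability of Theorem \ref{theo:tv} in the first place, namely the existence of a unique invariant measure $\pi$ for the limit semigroup $P_t$ with $\pi(V) < \infty$, as well as the polynomial coupling control of the required order.

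I do not foresee any genuine obstacle in this argument: all the delicate work (regeneration via big jumps, Lyapunov plus control to drive two copies into the Doeblin set $C$, and the polynomial moment estimate $E_{x,y}(\tau_c^p) \le C(p)[V(x)+V(y)]$) has already been discharged in proving Theorem \ref{theo:tv}. The only mildly nontrivial point is the bootstrap from the bound on coupling-time moments to a polynomial rate \emph{uniform in the starting time $s$}, and that is precisely what the uniform moment bound $\sup_{s \ge 0} E(V(X_s)) < \infty$ established above provides.
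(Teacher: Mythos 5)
Your proof is correct and follows the paper's route exactly: the paper disposes of Theorem~\ref{theo:2} with a single remark that it is an immediate consequence of the last assertion of Theorem~\ref{theo:tv}, and your argument simply makes explicit the uniform bound $\sup_{s\ge 0}E(V(X_s))\le C(1+E(V(X_0)))<\infty$ obtained by disintegrating over $\mathcal L(X_0)$ and invoking the standing hypothesis (the $t_0\downarrow 0$ / Fatou remark is a reasonable way to cover the endpoint, though one could equally argue continuity of $s\mapsto E(V(X_s))$ on compacts).
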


As a consequence, we can then show that $ \mu_t $ converges, as $t \to \infty, $ to the invariant measure of the limit semigroup, in the following sense. 

\begin{cor}\label{cor:3}
Grant the assumptions of Theorem \ref{cor:1}.  Then for all $ p \geq 1 , $ there exists a constant $C= C(p, M_1 )   > 0, $ such that for all $ t \geq 0, $ 
\begin{equation}
d_{ \mathcal F} ( \mu_t, \pi ) \le C t^{- p} .
\end{equation}
\end{cor}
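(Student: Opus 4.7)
The idea is to interpolate between $\mu_t$ and $\pi$ via the intermediate measure $\mu_{t-s}P_s$ for an appropriately chosen $s = s(t)$, using Theorem \ref{cor:1} to control how much $\mu_t$ differs from the pushforward $\mu_{t-s}P_s$ of the limiting semigroup, and Theorem \ref{theo:2} to control how much $\mu_{t-s}P_s$ differs from $\pi$.

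The starting point is the triangle inequality
\[
d_{\mathcal F}(\mu_t, \pi) \le d_{\mathcal F}(\mu_t, \mu_{t-s} P_s) + d_{\mathcal F}(\mu_{t-s} P_s, \pi),
\]
valid for any $0 \le s \le t$. For the first term, apply Theorem \ref{cor:1} with starting time $t-s$ and horizon $T = s$: this yields
\[
d_{\mathcal F}(\mu_t, \mu_{t-s} P_s) \le C e^{M_1 s} \int_{t-s}^{t} e^{-r u}\,du \le \frac{C}{r}\, e^{(M_1+r)s - rt}.
\]
For the second term, observe that $\mathcal F \subset \mathcal G$ since every $f$ with $\|f\|_{3,\infty} \le 1$ automatically satisfies $\|f\|_\infty \le 1$; consequently $d_{\mathcal F}(\mu_{t-s} P_s, \pi) \le d_{\mathcal G}(\mu_{t-s} P_s, \pi)$, and Theorem \ref{theo:2} (whose $V$-moment hypothesis I would note is inherited from the assumptions of Theorem \ref{cor:1} via the Lyapunov bound in Assumption \ref{ass:Final!}) gives $d_{\mathcal G}(\mu_{t-s} P_s, \pi) \le C(p)\, s^{-p}$.

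Combining the two bounds,
\[
d_{\mathcal F}(\mu_t, \pi) \le \frac{C}{r}\, e^{(M_1+r)s - rt} + C(p)\, s^{-p}.
\]
Now choose $s = \beta t$ for any fixed $\beta \in (0, r/(M_1+r))$, so that $(M_1+r)\beta - r < 0$. The first term then decays exponentially in $t$ and is asymptotically negligible compared with $t^{-p}$; the second term becomes $C(p)\beta^{-p} t^{-p}$. Absorbing both contributions into a single constant (and handling the bounded range of small $t$ trivially, since $d_{\mathcal F} \le 2$) yields the stated bound $d_{\mathcal F}(\mu_t, \pi) \le C(p, M_1)\, t^{-p}$.

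The only delicate point is the bookkeeping in the interpolation: one must be careful that the horizon $s$ in the Theorem~\ref{cor:1} bound is taken large enough (of order $t$) so that the polynomial rate coming from Theorem~\ref{theo:2} dominates, yet small enough (strictly less than $rt/(M_1+r)$) so that the exponential prefactor $e^{M_1 s}$ does not destroy the pseudotrajectory estimate. Choosing $s$ linear in $t$ is the natural balancing; everything else is routine.
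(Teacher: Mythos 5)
Your argument is correct and essentially identical to the paper's own proof: the paper also interpolates via $\mu_{\alpha t} P_{(1-\alpha)t}$ (your $s = \beta t$ with $\beta = 1-\alpha$), controls the first term with Theorem \ref{cor:1} and the second with Theorem \ref{theo:2} via the inclusion $\mathcal F \subset \mathcal G$, and then chooses the split point so that $(M_1+r)(1-\alpha) - r < 0$. The only cosmetic difference is the parametrization of the intermediate time.
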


\begin{proof}
Fix some $ 0 < \alpha < 1 .$ Then, since $ {\mathcal F} \subset {\mathcal G} ,$ 
$$ d_{ \mathcal F} ( \mu_t, \pi ) \le d_{\mathcal F} ( \mu_t , \mu_{ \alpha t } P_{t- \alpha t } ) + d_{\mathcal G}  (  \mu_{ \alpha t } P_{t- \alpha t } , \pi ) .$$
Using \eqref{eq:217} with $T = ( 1- \alpha) t $ and with $ \alpha t  $ instead of $ t $ by   we obtain 
$$d_{\mathcal F} ( \mu_t , \mu_{ \alpha t } P_{t- \alpha t } )  \le C e^{ M_1  ( 1- \alpha) t } e^{- r \alpha t }.$$ 
Choose therefore $ \alpha  $ sufficiently close to $1$ such that 
$$ r \alpha  - M_1  ( 1- \alpha)     >  r/2 .$$
For this choice of $\alpha, $ 
$$ d_{\mathcal F} ( \mu_t , \mu_{ \alpha t } P_{t- \alpha t } )  \le C e^{- \frac{r}{2}    t }$$ 
and by \eqref{eq:controlg}, 
$$ d_{\mathcal G}  (  \mu_{ \alpha t } P_{t- \alpha t } , \pi ) \le C (p)   (1 - \alpha )^{-p} t^{-p}  ,$$ 
from which we deduce the result.  
\end{proof}


\section{Examples}\label{sec:4}
\subsection{Hawkes processes with exponential memory kernels and memory of variable length in a mean-field frame}
Hawkes processes are point process models which are very important from a modeling point of view. They have regained a lot of interest in the recent years, in particular in econometrics, as good models to account for contagion risk and clustering arrival of events. They have also shown to be very useful in neuroscience due to their capacity of reproducing both the typical time dependencies observed in spike trains of neurons as well as the interaction structure of neural nets. Originally introduced by  \cite{Hawkes} and \cite{ho} as a model for the appearances of earthquakes, their key feature is the fact that any point event is able to trigger future events -- for this reason, Hawkes processes are sometimes called ``self-exciting point processes''. 

We start by briefly recalling the definition of a Hawkes process. A Hawkes process $Z$ is a  counting process on the real line $\R .$ Its law is characterized by its stochastic intensity processes $ \lambda_t   $ which is defined through the relation 
$ P ( Z \mbox{ has a jump in } ]t , t + dt ] | {\mathcal F}_t ) = \lambda_t  dt , $
where $ {\mathcal F}_t = \sigma (  Z ( ]u, s ] ) , \, -\infty < u <  s \le t ),$ and where 
\begin{equation}\label{eq:intensity0}
\lambda_ t = f  \left(     \int_{]-\infty  , t [} h ( t-s) d Z_s  \right) .
\end{equation}
Here, $f : \R \to \R_+$ is the {\it jump rate function} and $ h : \R_+ \to \R  $ is the {\it memory kernel.} 

For simplicity, in what follows we suppose that $h$ is an exponential kernel, that is, it is of the form 
\begin{equation}\label{eq:Erlang}
h(t)=ce^{- \alpha t}, t \geq 0,
\end{equation}
where $ c \in \R  .$ If $ c > 0 , $ then the process is self-exciting, a negative value of $c$ implies that the process is self-inhibiting. 

Instead of considering a single Hawkes process $Z, $ systems of interacting Hawkes processes display a much richer behavior. So let us consider, inspired by \cite{SusanneEva} and \cite{aaee},  a system of $N$ Hawkes processes $Z^1, \ldots , Z^N ,$ having intensity $ \lambda_t^1, \ldots , \lambda_t^N $ each. We will suppose that the interactions between these processes are of mean field type (see \eqref{eq:intensitymf} and \eqref{eq:int} below), with exponential memory kernel. In many situations it is reasonable to assume that the jump intensity of some of the processes, say of process $Z^1,$ is only influenced by its history since its last jump time. This is what \cite{GL} call ``memory of variable length'', in reminiscence of the so-called ``Variable length Markov chains'' coined by Rissanen \cite{rissanen1983} (1983). More precisely, if we put 
$$ L_t := \sup \{ s \le t :\Delta  Z^1 (s) = 1 \} , $$
which is the last jump of $ Z^1$ before time $t,$ then we suppose that the jump intensity of $Z^1 $ is of the form 
\begin{equation}\label{eq:intensitymf}
\lambda_t^1 = f_1 \left(  \frac{1}{{N- 1}} \sum_{j=2}^N  \int_{]L_t , t [} e^{-  \alpha (t- s)} d Z^j_s\right) .
\end{equation}
In addition, we suppose that the intensity of any of the remaining processes $Z^2, \ldots, Z^N $ is given by
\begin{equation}\label{eq:int}
 \lambda^2_t =\ldots = \lambda^N_ t = f_2 \left(  \frac{b}{\alpha} - \frac{c}{N-1} \sum_{j=2}^N  \int_{]- \infty , t [} e^{-  \alpha (t- s)} d Z^j_s \right) .
\end{equation}

Here,  $f_1 $ and $f_2$ are non-decreasing, strictly positive, lowerbounded, bounded Lipschitz continuous functions having bounded derivative. Moreover, $ \alpha , b , c > 0 $ are fixed constants.

Introduce now
$$ X_t^{1} := \frac{1}{{N- 1}} \sum_{j=2}^N  \int_{]L_t , t ]} e^{-  \alpha (t- s)} d Z^j_s, \; X_t^2 := \frac{b}{\alpha} -  \frac{c}{N-1} \sum_{j=2}^N  \int_{]- \infty , t ]} e^{-  \alpha (t- s)} d Z^j_s,$$
both processes depend implicitly on $ N, $ the number of interacting components in the system. Therefore, we shall write $ X_t^{[N]} := X_t := (X_t^1 , X_t^2 );$ this is a two-dimensional Markov process with drift coefficient
$$ b (N, x) =   - \alpha x + b \left( 
\begin{array}{ll}
0 \\
1
\end{array}
\right) .$$
In order to recognize the different jump regimes, we actually have to guess the fast, intermediate and the slow regime. It turns out that due to the mean field frame, no diffusive regime will appear in the limit (that is, no fast regime is present in this case). The jumps of process $Z^1 $ induce big jumps of $X_t^1. $  Indeed, each time that $Z^1 $ jumps, the process $X^1 $ is reset to $0, $ which is a consequence of its variable length memory structure. All other jumps will lead to a deterministic drift function in the limit.

Therefore, we may choose $ d= 2, m = 1 $ and $\mu (dz ) = dz $ together with $E_t^1 = \emptyset , $ $E_t^2 = ]0, 1 [ ,$ $E_t^3 = ]1, 2 [ ,$ 
$$ \gamma (N, z, x)  = 1_{ ]0, 1 [} (z) (N-1) f_2 (  x^2) +  1_{ ]1, 2 [} (z) f_1 (  x^1) ,   $$
and jump amplitude functions  
\begin{equation}\label{eq:amplitude}
 c (N, z, x)   =  1_{ ]0, 1 [} (z) \frac{1}{{N-1 }} 
\left( \begin{array}{cc}
1 \\
-c
\end{array}
\right)+  1_{ ]1, 2 [} (z) \left( \begin{array}{cc}
-x^1 \\
0
\end{array}
\right) .
\end{equation}

Instead of considering the frame of time inhomogeneous processes, in the present situation it is reasonable to prove the convergence of $X^{[N]} $ to a limit process, as the number of interacting components, $N, $ tends to infinity. 
Firstly, we realize that the ``jump drift'' given by $\int_{E_t^2} \gamma (N, z, x)  c (N, z,  x)dz $ equals
$$ \int_{E_t^2} \gamma (N, z, x)  c (N, z,  x)dz =f_2 (x^2) \left( \begin{array}{cc}
1 \\
-c
\end{array}
\right)  .$$

Therefore, the associated limit process is given by $ \bar X_t = ( \bar X_t^1 , \bar X_t^2 ) , $ where $ \bar X_t^2 $ follows an autonomous deterministic equation given by 
$$ d \bar X_t^2 = - \alpha \bar X_t^2 - c f_2 ( \bar X_t^2) dt + b dt ,$$
and where 
\begin{equation}\label{eq:sdehawkeslimit}
d \bar X^1_t = - \alpha \bar X_t^1 dt +f_2 ( \bar X_t^2) dt    -  \int_{\R_+} \bar X_{t-}^1 1_{ \{ u \le f_1 ( \bar X_{t-}^1 ) \}} \bar N (dt, du ) ,
\end{equation}
with $\bar N (dt, du ) $ a PRM on $ \R_+ \times \R_+ $ having intensity measure $ dt du .$ 
This limit process is a true PDMP having only ``big jumps'' (those of the first coordinate) and evolving in a deterministic manner in between successive jumps. 

Obviously, this process can only be ergodic if the coefficients $ \alpha , c $ and $b$ are such that the autonomous deterministic dynamical system describing $ \bar X_t^2 $ possesses an equilibrium. Since $f_2$ is non-decreasing, a sufficient condition for this is that 
$$ \alpha > 0 .$$ 
Once the second component is at equilibrium, the first component evolves as a renewal process:  the successive visits to the state $ \bar X_t^1 = 0 ,$ that occur at each jump of the process, induce an explicit regeneration scheme. We refer the reader to \cite{evafou} for the study of a (much more complicated) related situation.

Suppose now  that instead of being reset to $0$ after a big jump, the process $X_t^1$ is reset to some random value, that is, we replace \eqref{eq:amplitude} by  

\begin{equation}\label{eq:amplitudebis}
 c (N, z, x)   =  1_{ ]0, 1 [} (z) \frac{1}{{N-1 }} 
\left( \begin{array}{cc}
1 \\
-c
\end{array}
\right)+  1_{ ]1, 2 [} (z) \left( \begin{array}{cc}
-x^1 + \varepsilon (z - 1)\\
0
\end{array}
\right) ,
\end{equation}
for some small $\varepsilon > 0.$ This does not change the evolution of the second coordinate, but for the first one we obtain now 
\begin{equation}\label{eq:sdehawkeslimitbis}
d \bar X^1_t = - \alpha \bar X_t^1 dt +f_2 ( \bar X_t^2) dt    -  \int_{\R_+} [ \bar X_{t-}^1  - \varepsilon z] 1_{ \{ u \le f_1 ( \bar X_{t-}^1 ) \}} \bar N (dt, dz, du ) ,
\end{equation}
with $\bar N (dt, dz, du ) $ a PRM on $ \R_+ \times [0, 1 ] \times \R_+ $ having intensity measure $ dt dz du .$ 

With slight modifications of the tools developed in Section \ref{sec:1}, $ \bar X_t^1 $ can then easily shown to be exponentially ergodic, and it is straightforward to deduce that $ {\mathcal L} ( X_t^{[N]} ) $ is an asymptotic pseudotrajectory of the limit semigroup $ (P_t)_t$ if we choose a joint convergence of $ (N, t ) $ to infinity such that $ N = e^{rt} $ for some $r > 0.  $ In other words, we need to simulate an exponential (in time) number of particles $N$ in order to be sure that the above approximation procedure works -- which is the typical order of magnitude for the speed of convergence in mean field limits.

\subsection{A limit Cox-Ingersoll-Ross type jump process.}

We continue the example given in the introduction. Thus $ d=m=  1 $ and $ \mu ( dz ) = dz . $ For $t > 0$ and for some $r >0,$ 
$$ c (t, z, x ) = 
\frac{\sigma }{2 } e^{-rt}  [ 1_{ ]-3 e^{2rt}  , -2e^{2rt} [ } (z) - 1_{ ] -2e^{2rt} , -e^{2rt}  [ } (z) ]  - a x e^{-2rt}   1_{ ]- e^{2rt}, 0 [ } (z) + \frac{d}{(1 + z)^2 }  1_{ ]0, e^{2rt} [ } (z),$$
$b(t, x ) = b $  and 
$$ \gamma(t, z, x) =  f(x) 1_{ ]-3 e^{2rt}  , - e^{2rt} [ } (z) + 1_{ ]- e^{2rt}, 0 [ } (z)  +   f(x) 1_{ ]0, e^{2rt} [ } (z) , $$
for some constants $ \sigma , d \in \R $ and $ a, b  > 0 ,$ where, $f : \R \to \R_+$ is bounded, having bounded derivative such that $ \inf_{ x \in \R } f( x) > 0 .$  

By construction, jumps coming from ``noise'' $ z \in ]-3  e^{2rt},- e^{2rt} [ $ are centered, that is, 
$$ \int_{-3 e^{2rt}}^{-e^{2rt}} c(t, z, x) \gamma ( t, z , x ) \mu (dx ) = 0, $$ 
and the associated variance is given by $ a (t, x) = \sigma^2   f(x) $ for all $ t, x .$ 
Moreover, for all $ t, x, $ 
$$ \int_{-e^{2rt}}^{0} c(t, z, x) \gamma ( t, z , x ) \mu (dx ) =  - a x $$ 
giving rise to a limit drift $-a x .$ Finally, the jumps produced by noise $z > 0 $ survive in the limit process -- this corresponds to the slow jump regime. It is straightforward to show that the associated limit process is a Cox-Ingersoll-Ross type jump process given by  
$$
d \bar X_t =  (b - a  \bar X_t   ) dt + \sigma  \sqrt{  f (\bar X_t ) } d W_t  +  \int_{ \R  \times \R_+} \frac{d}{(1 + z)^2 }  1_{ \{ u  \le  f( \bar X_{t-}) \}}  N (dt,d z, du ) .
$$
Taking the Lyapunov function $V(x) = |x|^2, $ it is easy to see that $\bar X_t $ satisfies all conditions of Assumption \ref{ass:Final!} for any choice of $x_0 \in K $ since the diffusion part is uniformly elliptic (recall that $ f $ is strictly lower bounded).  Concerning the time inhomogeneous process, Assumptions \ref{conditions} and \ref{ass:1} are satisfied by construction. To check \eqref{eq:lyapunovlt}, it suffices to take once more $ V(x) = |x|^2 $ and $t_0 $ sufficiently large (such that $ a e^{-2 rt_0} < 2$). Finally, it is straightforward to verify that $C_{t_0} $ defined in \eqref{eq:ct} is finite. 

Therefore, Theorem \ref{theo:2} and Corollary \ref{cor:3} apply in this case.

\appendix

\section{Proof of Proposition \ref{prop:goodcontrol}}\label{sec:App}

The proof of Proposition \ref{prop:goodcontrol} follows a well-known scheme that we briefly sketch now. In the following, all assumptions of Proposition \ref{prop:goodcontrol} are supposed to be satisfied. Let $ C' = \{ x : |x- x_0| < \eta / 2 \} .$ 

\begin{cor}
Under Assumption \ref{ass:strongcontrol} or \ref{ass:weakcontrol}, the following holds true. 

(i) There exists $ \varepsilon > 0 $ such that for all $ x, y \in K$ there exists a coupling of $ \bar X_t^x $ and $ \bar X_t^y $ with
$$ P_{x, y} ( (\bar X^x_1, \bar X^y_1) \in C' ) > \varepsilon > 0 .$$ 

(ii) Suppose in addition that Assumption \ref{ass:drift} holds. Then 
$ \tau_c (x, y ) = \inf \{ t : (\bar X_t^x , \bar X_t^y ) \in C' \times C' \} , $ for $ x, y \in K, $ 
possesses polynomial moments of all orders, i.e., for all $ p \geq 1,$
$$ E_{x, y } \tau_c (x, y)^p < \infty $$
 
\end{cor}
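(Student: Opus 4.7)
For (i), my plan is to combine the control estimate of Theorem~\ref{theo:controlXbar} with the approximation estimate of Proposition~\ref{prop:control}. Set $\varepsilon_0 := \inf_{x \in K} P_x(|\XX_1 - x_0| \le \eta/4) > 0$, which is granted by Theorem~\ref{theo:controlXbar}. Enlarging $G$ if necessary, Proposition~\ref{prop:control} guarantees $\sup_{x \in \R^d} P_x(\sup_{t \le 1}|\XX_t - \bar X_t| > \eta/4) \le \varepsilon_0/2$, since $\alpha(G^c)$ can be made arbitrarily small. The triangle inequality then yields $\inf_{x \in K} P_x(|\bar X_1 - x_0| \le \eta/2) \ge \varepsilon_0/2$. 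Taking for $(\bar X_t^x, \bar X_t^y)$ the product coupling, driven by two independent copies of the driving Poisson random measure and Brownian motions, immediately gives (i) with $\varepsilon = (\varepsilon_0/2)^2$.

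For (ii), my plan is a geometric-trials argument built on the strong Markov property. I set $\sigma_0 := 0$ (noting that $x, y \in K$) and recursively $\sigma_{k+1} := \inf\{t \ge \sigma_k + 1 : (\bar X_t^x, \bar X_t^y) \in K \times K\}$, using on each window $[\sigma_k, \sigma_k + 1]$ the independent coupling from (i). With $N := \inf\{k \ge 0 : (\bar X_{\sigma_k + 1}^x, \bar X_{\sigma_k + 1}^y) \in C' \times C'\}$ and $\Delta_k := \sigma_{k+1} - \sigma_k$, we have $\tau_c(x, y) \le \sum_{k=0}^{N-1} \Delta_k + 1$. By the strong Markov property at $\sigma_k$ combined with (i), each trial succeeds with conditional probability at least $\varepsilon$ given $\mathcal{F}_{\sigma_k}$, so $N$ is stochastically dominated by a geometric random variable with parameter $\varepsilon$ and has finite moments of every order.

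To bound the $\Delta_k$ uniformly in $k$, I will invoke the remark following Assumption~\ref{ass:drift}: $E_{x',y'}(e^{b\tau_{K \times K}}) \le V(x') + V(y') + C$. Applying Dynkin's formula to $LV \le -bV + c$ gives $E(V(\bar X_{\sigma_k + 1}^x) + V(\bar X_{\sigma_k + 1}^y) \mid \mathcal{F}_{\sigma_k}) \le 2\sup_K V + 2c/b$ uniformly in $k$, since $(\bar X_{\sigma_k}^x, \bar X_{\sigma_k}^y) \in K \times K$ by construction. Conditioning successively on $\mathcal{F}_{\sigma_k + 1}$ and $\mathcal{F}_{\sigma_k}$ then delivers a uniform bound $E(e^{b \Delta_k} \mid \mathcal{F}_{\sigma_k}) \le C_b$, hence uniform conditional $L^p$-bounds on $\Delta_k$ for every $p \ge 1$. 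A Cauchy--Schwarz argument applied to $E(\tau_c^p) \le \sum_n E\bigl(\bigl(\sum_{k=0}^{n-1} \Delta_k\bigr)^{p} \mathbf{1}_{N=n}\bigr) + 1$, combined with the polynomial growth in $n$ of the $L^{2p}$-norm of the partial sums and the geometric decay of $P(N \ge n)$, then yields $E_{x,y}(\tau_c(x, y)^p) < \infty$ for every $p$.

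The main obstacle is twofold. First, one must construct the coupled process on a single probability space carrying enough independent driving noises so that the strong Markov property truly applies at each $\sigma_k$, despite the fact that distinct couplings are being used on different segments (an arbitrary one outside the trial windows, the product one of (i) inside them). Second, the events $\{N > k\}$ are only $\mathcal{F}_{\sigma_k + 1}$-measurable rather than $\mathcal{F}_{\sigma_k}$-measurable, which forces a careful conditioning step when combining the geometric tail of $N$ with the uniform moment bounds on the $\Delta_k$.
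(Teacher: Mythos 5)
Your proposal takes essentially the same route as the paper: for (i), combine Theorem~\ref{theo:controlXbar} and Corollary~\ref{cor:412}/Proposition~\ref{prop:control} to steer the auxiliary process $\XX$ to a small ball, transfer to $\bar X$, and use the independent/product coupling; for (ii), a geometric-trials argument over $K\times K$-return times, with moments of the return increments controlled by the Lyapunov condition and Theorem~4.1 of \cite{DFG09}, just as in the paper's Appendix~A. The only cosmetic differences are that the paper assembles the final moment bound by a direct telescoping of $s_{\tau^*}^p$ while you use a Cauchy--Schwarz split across the event $\{N=n\}$; the conditioning subtlety you flag (that $\{N>k\}$ is only $\mathcal F_{\sigma_k+1}$-measurable) is real but your Cauchy--Schwarz route handles it correctly, so the argument is sound.
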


\begin{proof}
We use the independent coupling of $ \bar X_t^x$ and $\bar X_t^y, $ Corollary \ref{cor:412} and Theorem \ref{theo:controlXbar} to show that 
$$ \inf_{x, y \in K} P_{x, y } ( |\bar X_1^x - x_0 | \le \eta/2  , |\bar X_1^y - x_0 | \le \eta/2 ) := c_1 > 0 .$$ 
Introduce then the following sequence of stopping times. 
$$s_1 = \inf \{ t \geq 0 : (\bar X_t^x, \bar X_t^y ) \in K \times K \}, s_{n+1} = \inf \{ t \geq s_n + 1 : (\bar X_t^x, \bar X_t^y ) \in K \times K \} , $$
for all $n \geq 1 .$ Finally, put 
$$ \tau^* = \inf \{ n : (\bar X_{s_n +1}^x , \bar X_{s_n +1}^y) \in   C' \times C' \} ,$$
then clearly 
$$ P (\tau^* > n ) \le (1- c_1)^n .$$ 
Notice that we have 
$ \tau_c (x, y) \le S_{\tau^*}+ 1 $
and that $ s_{n+1} =   s_n + \tau_{ K \times K }(1) \circ \vartheta_{s_n} , $ where 
$$ \tau_{K \times K } ( \delta ) = \inf \{ t \geq \delta : (\bar X_t^x, \bar X_t^y) \in K \times K \} ,$$
for which we have the control 
$$ E_{x, y } (e^{b \tau_{K \times K } ( \delta )} ) \le V(x ) + V(y ) + C (\delta ), $$
thanks to Theorem 4.1 of \cite{DFG09}. This implies in particular that 
$$ E_{x, y }  (s_{n+1} - s_n)^p \le C (p ) $$
for a constant not depending on $x, $ nor on $y, $ for all $ n \geq 1.$ Then the main estimate which allows to conclude is 
$$ E_{x, y } (s_{\tau^*})^p \le E_{x, y } s_1^p + 2^p \sum_{n\geq 1  } \left[ E_{x, y }  (s_{n-1})^p + C(p ) \right]  (1- c_1)^{n- 1 } \le V(x) + V( y ) + \tilde C(p ) < \infty .$$
\end{proof}

The main idea is now to show that once the continuous time process has reached $ C'  \times C' , $ it takes some time to exit from $ C \times C .$ Taking $ \Gamma_n, $ the rate of the dominating Poisson process, sufficiently large, the probability that a jump $T_k^{[n]} $ arises during this time, i.e.\ before exiting from $C,$ can then be made arbitrarily large. Following arguments of \cite{Victor}, we therefore obtain the following result.

\begin{prop}\label{prop:33}[Proposition 3.3 of \cite{Victor}]
Grant Assumptions \ref{conditionsbis} and \ref{conditions2bis}. Then there exists $n_0, $ such that for any  $n \geq n_0,$ 
\begin{equation}\label{eq:lowerbound}
 \inf_{ x, y  \in C' } P_{x, y }  ( \bar X^x_{T^{[n]}_1 - }, \bar X^y_{T^{[n]}_1 - }) \in C \times C  )  \geq \frac{1}{2} .
\end{equation}
\end{prop}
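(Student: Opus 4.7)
The key observation is that as $n$ grows, the dominating Poisson rate $\Gamma_n$ tends to infinity, so the first big-jump time $T_1^{[n]} \sim \mathrm{Exp}(\Gamma_n)$ becomes very small in expectation, and neither $\bar X^x$ nor $\bar X^y$ has time to drift out of the larger ball $C$ before $T_1^{[n]}$ if they start in the smaller ball $C'$. Recall that in the coupling of Section 2.4, on the interval $[0, T_1^{[n]})$ the processes $\bar X^x$ and $\bar X^y$ coincide with two \emph{independent} copies of the dynamics \eqref{eq:z}, say $Z^x$ and $Z^y$, while $T_1^{[n]}$ is independent of both. Since $|x - x_0|<\eta/2$ for $x\in C'$, the triangle inequality shows that it suffices to prove the uniform tail bound
\begin{equation*}
 P\!\left(\sup_{s \le T_1^{[n]}} |Z^x_s - x| > \eta/2\right) \le \tfrac{1}{4}, \qquad x \in C',\ n \ge n_0.
\end{equation*}
Indeed, combined with the independence of $Z^x$ and $Z^y$ and a union bound, this yields $P_{x,y}\bigl((\bar X^x_{T_1^{[n]}-}, \bar X^y_{T_1^{[n]}-}) \in C \times C\bigr) \ge 1/2$ uniformly in $(x,y) \in C' \times C'$.

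To obtain this tail bound, I would derive a short-time moment estimate on $Z^x$. Taking absolute values in \eqref{eq:z}, applying Burkholder--Davis--Gundy to the diffusion part, and compensating the jumps from $G_n^c$, a standard Gronwall argument based on Assumption \ref{conditionsbis} yields, for $0 \le t \le 1$ and $x \in C'$,
\begin{equation*}
 E\!\left[\sup_{s \le t} |Z^x_s - x|\right] \le C \sqrt{t} + C\,(1 + \alpha(G_n^c))\,t,
\end{equation*}
where $\alpha(G_n^c) = \sup_{x}\int_{G_n^c} |c(z,x)|\gamma(z,x)\mu(dz) < \infty$ by Assumption \ref{conditionsbis}(5), and where the constants are uniform for $x$ in the bounded set $C'$ thanks to $\|\nabla g\|_\infty + \sum_l \|\nabla \sigma_l\|_\infty < \infty$. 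Combining this with Markov's inequality and integrating against the density $\Gamma_n e^{-\Gamma_n t}\,dt$ of $T_1^{[n]}$, and using $E[\sqrt{T_1^{[n]}}] = \Gamma(3/2)/\sqrt{\Gamma_n}$ together with $E[T_1^{[n]}] = 1/\Gamma_n$, gives
\begin{equation*}
 P\!\left(\sup_{s \le T_1^{[n]}} |Z^x_s - x| > \eta/2\right) \le \frac{2C}{\eta}\!\left( \frac{\Gamma(3/2)}{\sqrt{\Gamma_n}} + \frac{1+\alpha(G_n^c)}{\Gamma_n} \right),
\end{equation*}
which tends to zero as $n \to \infty$, uniformly for $x \in C'$, since $\Gamma_n \uparrow \infty$. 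Choosing $n_0$ large enough to make this bound at most $1/4$ completes the proof.

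The main technical point is the uniform short-time moment estimate in the presence of the potentially high-activity small jumps from $G_n^c$. The right way to handle those is to control the cumulative displacement through the compensator measure $|c(z,x)|\gamma(z,x)\mu(dz)$ rather than through the total jump intensity $\int_{G_n^c}\gamma(z,x)\mu(dz)$, which is not assumed to be bounded; this is exactly where Assumption \ref{conditionsbis}(5) enters. Once this estimate is secured, the randomization over the exponential law of $T_1^{[n]}$ and the independence in the coupling deliver the conclusion at once.
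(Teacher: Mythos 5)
Your proof is correct and follows exactly the route the paper sketches just before stating the proposition: starting in the smaller ball $C'$, the continuous dynamics (here the process $Z$ of \eqref{eq:z}) cannot cover the remaining distance $\eta/2$ to exit $C$ before the first big-jump time $T_1^{[n]}$, whose expectation shrinks like $1/\Gamma_n$ as $n\to\infty$, and a union bound over the two coordinates then yields the stated $1/2$. The paper itself delegates the rigorous argument to Proposition 3.3 of the cited reference of L\"ocherbach and Rabiet; your short-time moment bound for $Z$ (uniform in $n$ because $\alpha(G_n^c)$ is bounded by Assumption \ref{conditionsbis}(5) and the Lipschitz constants are $n$-independent) combined with Markov's inequality and integration against the exponential law of $T_1^{[n]}$ is a legitimate way to fill in those details, modulo the standard bookkeeping (restricting the Gronwall estimate to $t\le 1$ and absorbing the exponentially small event $\{T_1^{[n]}>1\}$, and running the Gronwall loop at the level of second moments so the BDG term closes).
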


The assertion of Proposition \ref{prop:goodcontrol} follows then along the lines of the proof of Proposition 3.6 of \cite{Victor}.

\section{Proof of Theorem \ref{cor:1}}
To prove Theorem \ref{cor:1}, we start by recalling the following result of \cite{ballyetc}.

\begin{theo}[Theorem 14 of \cite{ballyetc}]\label{theo:diff}
Suppose that the coefficients of \eqref{eq:sde0} satisfy Assumption \ref{conditionsbis}. Then there exists a constant $C > 0$ such that for any $ f \in C^q_b ( \R^d ) ,$ for any $ T > 0 $ and for all $t \le T, $ 
$$  \| P_t f \|_{q, \infty } \le C \|f\|_{q, \infty } ( t \vee 1 )  \alpha^{2q  }_{q, 4 q  } ( C , T ) \times \left( 1+ \Gamma_{ q} ( \gamma) + \sum_{ 0 \le |\beta | \le q } [ \partial^\beta_x \ln \gamma ]_{ 4q } \right)^q .$$
\end{theo}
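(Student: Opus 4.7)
The plan is to establish the pointwise regularity estimate on $P_t f$ by differentiating under the expectation $P_t f(x) = E[f(\bar X_t^x)]$ up to order $q$ and then controlling the resulting ``Malliavin weights'' in $L^p$. The essential subtlety, compared with the classical pure-diffusion case, is that both the jump times and the jump marks of $\bar X^x$ depend on the starting point $x$ through the state-dependent rate $\gamma(z, \bar X^x_{s-})$, so one cannot naively move the spatial derivative inside the Poisson integral; this is precisely what produces the quantities $\Gamma_q(\gamma)$ and $[\partial^\beta_x \ln \gamma]_{4q}$ in the bound.

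First I would represent the jumps of $\bar X^x$ via the thinning construction already used in \eqref{eq:sde0}: the enlarged Poisson measure $N(ds,dz,du)$ on $\R_+\times\R^m\times\R_+$ has intensity $ds\,\mu(dz)\,du$ which does \emph{not} depend on $x$, and the whole $x$-dependence of the jump mechanism is absorbed into the acceptance indicator $\mathds{1}_{u \le \gamma(z,\bar X^x_{s-})}$. Then the first-order tangent flow $Y_t := \nabla_x \bar X^x_t$ satisfies a linear SDE whose continuous part is driven by $\nabla g(\bar X^x_s)$ and $\nabla \sigma_l(\bar X^x_s)$; its jump part, obtained by formally differentiating the indicator across the boundary $\{u=\gamma(z,\bar X^x_{s-})\}$, must be rewritten via an integration by parts in the $u$-variable, producing a compensating stochastic integral whose integrand involves $\nabla_x \ln \gamma(z, \bar X^x_{s-})$. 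Iterating this procedure up to order $q$ yields, for every multi-index $\alpha$ with $|\alpha|=q$, an identity of the form
\begin{equation*}
\partial^\alpha_x P_t f(x) \;=\; \sum_{|\beta|\le q} E\bigl[\,(\partial^\beta f)(\bar X^x_t)\, W^{\alpha,\beta}_t\,\bigr],
\end{equation*}
where each random weight $W^{\alpha,\beta}_t$ is a polynomial combination of (i) the first $q$ tangent flows of the continuous dynamics, (ii) the derivatives $\partial^\gamma_x c(z,\cdot)$ with $|\gamma|\le q$, and (iii) stochastic integrals against $N$ (and its compensator) whose integrands contain $\partial^\gamma_x \ln\gamma(z,\cdot)$ for $|\gamma|\le q$.

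Second, I would bound $E|W^{\alpha,\beta}_t|$ by controlling its $L^{4q}$-norm and then using Hölder. The $L^{4q}$ choice is forced by the fact that $W^{\alpha,\beta}_t$ is a product of up to $q$ elementary factors, each of which only sits in $L^{4q}$ uniformly in $t\le T$; this explains the index $p=4q$ appearing in $\alpha_{q,4q}(C,T)$ and in the sum $\sum_{|\beta|\le q}[\partial^\beta_x \ln\gamma]_{4q}$. Burkholder--Davis--Gundy inequalities for Poisson stochastic integrals, together with a Gronwall argument iterated once for each derivative order, then yield a bound of the form
\begin{equation*}
\bigl(E|W^{\alpha,\beta}_t|^{4q}\bigr)^{1/(4q)} \;\le\; C\,(t\vee 1)^{1/(q)}\, \theta^{2}_{q,4q}\,\exp\!\Bigl(CT\sum_{1\le n\le q}\tfrac1n \alpha_{4q\cdot n}\Bigr)\, \bigl(1+\Gamma_q(\gamma)+\textstyle\sum_{|\beta|\le q}[\partial^\beta_x\ln\gamma]_{4q}\bigr),
\end{equation*}
which, after taking the $q$-fold product needed to reconstruct $W^{\alpha,\beta}_t$, matches exactly the factor $\alpha^{2q}_{q,4q}(C,T)\bigl(1+\Gamma_q(\gamma)+\sum_{|\beta|\le q}[\partial^\beta_x\ln\gamma]_{4q}\bigr)^{q}$ of the statement. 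Combining this with the trivial bound $|\partial^\beta f|\le \|f\|_{q,\infty}$ for $|\beta|\le q$ then gives
\begin{equation*}
|\partial^\alpha_x P_t f(x)| \;\le\; C\,\|f\|_{q,\infty}(t\vee 1)\,\alpha^{2q}_{q,4q}(C,T)\,\Bigl(1+\Gamma_q(\gamma)+\sum_{|\beta|\le q}[\partial^\beta_x\ln\gamma]_{4q}\Bigr)^{q},
\end{equation*}
uniformly in $x$, and the factor $(t\vee 1)$ is recovered from the fact that the weights vanish linearly in $t$ for small times (they involve stochastic integrals over $[0,t]$) but stabilize for $t\ge 1$.

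The hard part is the precise bookkeeping of the iterated integration-by-parts in the $u$-variable: each new spatial derivative produces both a ``boundary'' contribution supported on $\{u=\gamma(z,\bar X^x_{s-})\}$, which must be rewritten as a weighted Poisson integral to bring in another factor of $\ln\gamma$, and an ``interior'' contribution coming from differentiating the already-accumulated weights; tracking that, after $q$ iterations, the exponents on $\Gamma_q(\gamma)$ and on $[\partial^\beta_x\ln\gamma]_{4q}$ are exactly $q$ (and not, say, $q!$) is the delicate combinatorial step. Once this is carried out, the non-explosion and moment estimates of the tangent flow follow from Assumption \ref{conditionsbis} by standard BDG/Gronwall reasoning, and taking the supremum in $x$ of the pointwise bound above, together with the trivial bound $\|P_t f\|_\infty \le \|f\|_\infty$, concludes the proof of the claimed estimate on $\|P_t f\|_{q,\infty}$.
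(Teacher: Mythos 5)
The paper does not actually prove this statement: it is quoted verbatim as Theorem 14 of \cite{ballyetc} and used as a black box, so there is no internal proof to compare yours against. Judged on its own merits, your outline reproduces the right \emph{shape} of the estimate (weights containing $\partial^\beta_x\ln\gamma$, $L^{4q}$-moment bounds, BDG plus Gronwall iterated over the derivative order), but its central step has a genuine gap. You propose to differentiate $x\mapsto \bar X^x_t$ pathwise and to handle the acceptance indicator $\mathds{1}_{u\le\gamma(z,\bar X^x_{s-})}$ by ``formally differentiating across the boundary'' and then integrating by parts in $u$. But under the thinning representation the map $x\mapsto \bar X^x_t$ is almost surely \emph{discontinuous}: an arbitrarily small change of $x$ changes which atoms $(s,z,u)$ of $N$ are accepted, and each such change produces a macroscopic jump of the trajectory. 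Hence there is no first-order tangent flow $Y_t=\nabla_x\bar X^x_t$ to write a linear SDE for, and the ``boundary contribution'' you invoke is a Dirac mass supported on a random surface, not a well-defined Poisson stochastic integral; the identity $\partial^\alpha_x P_tf(x)=\sum_\beta E[(\partial^\beta f)(\bar X^x_t)W^{\alpha,\beta}_t]$ cannot be obtained this way without an additional idea.

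The idea that makes this rigorous (and which is what \cite{ballyetc} does, and what the present paper itself uses in the proof of Theorem \ref{theo:controlXbar}) is to first pass to a finite-activity approximation and use the ``real shocks'' representation: conditionally on the jump times of a dominating Poisson process of rate $\mu(G)\Gamma$, the jump marks are drawn from the kernel $q_G(z,y)\mu^*(dz)$ whose density is proportional to $\gamma(z,y)$, together with a fictive no-jump mark. In that representation the state dependence of the intensity sits in a conditional \emph{density}, which is smooth in the pre-jump position, and differentiating it is exactly what produces the $\partial^\beta_x\ln\gamma$ weights and the quantities $\Gamma_q(\gamma)$ and $[\partial^\beta_x\ln\gamma]_{4q}$; regularity of $P_tf$ is then obtained for the truncated semigroup and transferred to the limit via the stability estimate of Proposition \ref{prop:control}. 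Without this reformulation (or an equivalent Girsanov-type change of intensity), your integration-by-parts-in-$u$ step is the precise point where the argument fails, and it is also the step where the claimed exponent bookkeeping (powers $q$ rather than worse) would have to be justified. A minor additional remark: the factor $(t\vee 1)$ in the statement is an upper bound growing with the time horizon, not a reflection of weights ``vanishing linearly in $t$'' for small times, so that part of your explanation should be dropped.
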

Observe further that  \eqref{eq:ct} implies that 
$$ \| \nabla L_t f \|_\infty \le C_{t_0} \|f\|_{3, \infty } ,$$
for all $ t \geq t_0 ,$ which is condition (43) of \cite{ballyetc} with $k=1$ and $ q =3 .$ 

As a consequence, all conditions needed to obtain Theorem 16 of \cite{ballyetc} are satisfied and we obtain

\begin{theo}\label{theo:1}[Theorem 16 of \cite{ballyetc}]
Suppose that the coefficients of \eqref{eq:sde} and of \eqref{eq:sde0} satisfy Assumption \ref{conditions} and \ref{conditionsbis}. Suppose moreover that $ Q_3 (P, T ) < \infty $ for all $ T > 0 $ and that $C_{t_0 } < \infty $ for some $t_0 > 0 .$ Finally, grant Assumptions \ref{ass:1} and  \ref{unifbounded}.

Then there exists a constant $C $ not depending on $t_0, $ such that for all $ t_0 \le  t \le t_0 +  T ,$
\begin{equation}
| E ( f ( X_{t_0, t }^x)) - E (f ( \bar X^x_{t_0, t} ) ) | \le C  Q_3(P, T ) \psi ( x) \; \| f \|_{3, \infty}  \; [( t-t_0)\vee 1]  \int_{t_0}^t  e^{-rs}  ds  ,
\end{equation}
where $ \psi ( x) = 1 + |x| .$ 
\end{theo}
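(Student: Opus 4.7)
\textbf{Proof proposal for Theorem \ref{theo:1}.} The plan is to run the classical Trotter-Kato style comparison of the two semigroups, combining Dynkin's formula with the two main ingredients already collected in the excerpt: the pointwise generator estimate $|Lf(x)-L_tf(x)|\le Ce^{-rt}(1+|x|)\|f\|_{3,\infty}$ (established in the proposition that immediately precedes Assumption \ref{unifbounded}), and the regularity estimate of Theorem \ref{theo:diff} for $P_tf$ under $Q_3(P,T)<\infty$.

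The key step is the following telescoping identity. For fixed $t_0\le t$ and $f\in C^3_b(\R^d)$, define
\begin{equation*}
\varphi(s):=E\big[(P_{t-s}f)(X^x_{t_0,s})\big],\qquad s\in[t_0,t].
\end{equation*}
Then $\varphi(t_0)=(P_{t-t_0}f)(x)=E[f(\bar X^x_{t_0,t})]$ and $\varphi(t)=E[f(X^x_{t_0,t})]$, so that
\begin{equation*}
E[f(X^x_{t_0,t})]-E[f(\bar X^x_{t_0,t})]=\int_{t_0}^t\varphi'(s)\,ds.
\end{equation*}
Using $\partial_s P_{t-s}f=-L(P_{t-s}f)$ together with Ito's formula applied to $s\mapsto (P_{t-s}f)(X^x_{t_0,s})$ (which produces the time-inhomogeneous generator $L_s$), the martingale parts vanish in expectation and one obtains the standard identity
\begin{equation*}
\varphi'(s)=E\big[(L_s-L)(P_{t-s}f)(X^x_{t_0,s})\big].
\end{equation*}
The nontrivial justification here is that $P_{t-s}f$ belongs to $C^3_b(\R^d)$ so that both $L$ and $L_s$ can be applied to it and Ito's formula is legitimate; this is precisely what Theorem \ref{theo:diff} (with $q=3$, $p=12$) supplies under the hypothesis $Q_3(P,T)<\infty$, giving
\begin{equation*}
\|P_{t-s}f\|_{3,\infty}\le C\|f\|_{3,\infty}\big((t-s)\vee 1\big)Q_3(P,T).
\end{equation*}

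It remains to plug in the pointwise generator estimate already available in the paper: applying it to the function $g=P_{t-s}f$, which is of class $C^3_b$ by the previous bound, yields
\begin{equation*}
\big|(L_s-L)(P_{t-s}f)(y)\big|\le Ce^{-rs}(1+|y|)\,\|P_{t-s}f\|_{3,\infty}\le Ce^{-rs}(1+|y|)\|f\|_{3,\infty}\big((t-s)\vee 1\big)Q_3(P,T).
\end{equation*}
Taking expectation at $y=X^x_{t_0,s}$ and invoking Assumption \ref{unifbounded} to bound $E[1+|X^x_{t_0,s}|]\le C(1+|x|)=C\psi(x)$ uniformly in $s\ge t_0$, and finally estimating $(t-s)\vee 1\le (t-t_0)\vee 1$ in the $s$-integral, one arrives at
\begin{equation*}
\big|E[f(X^x_{t_0,t})]-E[f(\bar X^x_{t_0,t})]\big|\le C\,Q_3(P,T)\,\psi(x)\,\|f\|_{3,\infty}\,\big((t-t_0)\vee 1\big)\int_{t_0}^t e^{-rs}\,ds,
\end{equation*}
which is the asserted bound.

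\textbf{Main obstacle.} The analytical heart of the argument is the regularity estimate on $P_{t-s}f$. In the presence of the position-dependent jump intensity $\gamma(z,x)$ appearing in $L$, the usual flow-differentiation arguments are delicate, which is why the paper pulls in the Malliavin-calculus machinery of \cite{ballyetc} through Theorem \ref{theo:diff} and the functional $Q_3(P,T)$; everything else (Dynkin's identity, the pointwise generator bound, and the ultimate boundedness of $X$) is a routine assembly once the $C^3_b$-regularity of $P_{t-s}f$ with the explicit polynomial-in-$(t-s)$ control is granted. A secondary technical point is the rigorous Ito formula for $s\mapsto (P_{t-s}f)(X^x_{t_0,s})$ in the time-inhomogeneous jump setting and the integrability of the resulting stochastic integrals, but under Assumption \ref{conditions} and the boundedness of the relevant compensators this is standard.
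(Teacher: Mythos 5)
Your outline is correct in substance, but it takes a different route from the paper: the paper does not prove this theorem at all. The statement is quoted verbatim as Theorem 16 of \cite{ballyetc}, and the appendix only checks the hypotheses of that external result, namely the $C^3_b$-regularity of the limit semigroup via Theorem \ref{theo:diff} (Theorem 14 of \cite{ballyetc}, which is where $Q_3(P,T)$ comes from) and the bound $\| \nabla L_t f\|_\infty \le C_{t_0} \|f\|_{3,\infty}$ supplied by \eqref{eq:ct}, which is condition (43) of \cite{ballyetc}. What you wrote is essentially a self-contained reconstruction of the interpolation argument that underlies that citation: the function $\varphi(s)=E[(P_{t-s}f)(X^x_{t_0,s})]$, the identity $\varphi'(s)=E[(L_s-L)(P_{t-s}f)(X^x_{t_0,s})]$, the generator-difference estimate of order $e^{-rs}(1+|y|)\|\cdot\|_{3,\infty}$, the semigroup regularity bound $\|P_{t-s}f\|_{3,\infty}\le C\|f\|_{3,\infty}((t-s)\vee 1)Q_3(P,T)$, and Assumption \ref{unifbounded} to produce $\psi(x)$; assembled this way the constants match the asserted bound, so your route buys transparency about where each hypothesis acts, at the price of having to justify the Kolmogorov/Dynkin steps yourself rather than outsourcing them. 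One point you should repair before calling the rest ``routine'': your sketch never uses the hypothesis $C_{t_0}<\infty$, yet it is part of the theorem's statement, and in the paper it is precisely the verification of condition (43) of \cite{ballyetc}; in a genuinely self-contained proof this control on $\nabla L_s g$ for $g\in C^3_b$ (hence on $L_s(P_{t-s}f)$ and its regularity in the space variable) is what makes the differentiation of $\varphi$ and the Dynkin identity for the time-inhomogeneous process legitimate, so you should exhibit explicitly where it enters rather than leaving that step implicit.
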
 

We are now able to conclude the 
\begin{proof}[Proof of Theorem \ref{cor:1}]
Theorem \ref{theo:1} implies that for all $s \le T,$ 
$$ d_{\mathcal F} ( \mu_{t + s } , \mu_t P_s ) \le C Q_3 (P, T) [\int \psi (x) \mu_t (dx)]  T \int_t^{t+T} e^{-rs} ds .$$
By definition of $ Q_3 ( P, T ), $ we have that $ Q_3 (P, T ) \le C e^{ M_1 T  },$ for some suitable constant $M_1.$ Moreover,  Assumption \ref{unifbounded} implies that $\int \psi (x) \mu_t (dx) = 1 + E | X_t| < C(1 + E |X_0 | ) < \infty .$ This concludes our proof.
\end{proof}
\section*{Acknowledgements}
The author would like to thank Julien Chevallier for careful reading and useful remarks.

\end{document}